\title[Spectre de Steklov et nombre chromatique]{Multiplicité du 
spectre de Steklov sur les surfaces et nombre chromatique} 
\author{Pierre Jammes}
\address{Univ. Nice Sophia Antipolis, CNRS, LJAD, UMR 7351\\
06100 Nice France}
\email{pjammes@unice.fr}
\begin{document}
\begin{abstract}
Dans cet article, on démontre plusieurs
résultats sur la multiplicité des premières valeurs propres de Steklov
sur les surfaces compactes à bord. On améliore certaines bornes sur
la multiplicité, 
en particulier pour la première valeur propre, et on montre qu'elles sont
optimales sur  plusieurs surfaces de petit genre. Dans un
article précédent, on a défini un nouvel invariant chromatique des surfaces
à bord et on a conjecturé qu'il est relié à la multiplicité de 
la première valeur propre de Steklov. Dans le present article on étudie
cet invarianti et on confirme la conjecture dans les cas où la borne 
optimale est connue.
\end{abstract}

\begin{altabstract}
In this article, we prove several results about the 
multiplicity of the first Steklov eigenvalues on compact surfaces with 
boundary. We improve some bounds on the multiplicity, especially for
the first eigenvalue, and we prove they are sharp on some surfaces
of small genus. In a previous article, we defined a new chromatic invariant
of surfaces with boundary and conjectured that this invariant is related
to the bound on the first eigenvalue. In the present article, we study
this invariant, and prove that the conjecture is true when the known bound 
is sharp.
\end{altabstract}
\keywords{spectre de Steklov, multiplicité de valeurs propres, nombre
chromatique}
\altkeywords{Steklov eigenvalues, multiplicity, chromatic number}

\subjclass{35P15, 58J50, 55A15, 57M15}

\maketitle

\section{Introduction}
L'étude de la multiplicité des valeurs propres de Steklov a fait 
récemment l'objet des travaux \cite{fs12}, \cite{ja14}, 
\cite{kkp12}. Ces trois articles montrent que sur une surface compacte
à bord donnée, la multiplicité de la $k$-ième valeur propre de Steklov 
est majorée en fonction de $k$ et de la topologie (voir le paragraphe
qui suit pour la définition du spectre de Steklov). Dans \cite{ja14}, 
j'ai montré que ce phénomène est spécifique à la dimension~2 et qu'en
dimension plus grande, on peut prescrire arbitrairement le début
du spectre de Steklov, avec multiplicité. J'ai aussi conjecturé qu'en
dimension~2, la multiplicité maximale de la première valeur propre non nulle
est déterminée par un invariant topologique de même nature que le nombre 
chromatique mais spécifique aux surfaces à bord (cf. la 
conjecture~\ref{intro:conj} ci dessous).

L'objet de cet article est triple. D'abord, améliorer certaines bornes
sur la multiplicité du spectre de Steklov en dimension~2, en particulier pour
la première valeur propre et sur les surfaces de petit genre. Ensuite,
construire des exemples de première valeur propre multiple, ce qui permet
de montrer que certaines bornes sont optimales. Enfin, on étudie 
l'invariant chromatique des surfaces à bord définie dans \cite{ja14}, 
qu'on appellera \emph{nombre chromatique relatif}, on justifiera 
en particulier cette dénomination et on calculera sa valeur sur les
surfaces de petit genre.

 La confrontation de ces différents résultats permet de consolider 
la conjecture faite dans \cite{ja14} sur le lien entre
la multiplicité 
de la première valeur propre et le nombre chromatique relatif en montrant 
qu'elle est vérifiée dans 
tous les cas où on connaît la borne optimale et qui sont résumés
dans la table~\ref{intro:table} ci-dessous.

\subsection{Définitions et notations}
Avant d'énoncer les résultats précis, nous allons rappeler les notions
en jeu et préciser quelques définitions.

Soit $M$ une variété compacte à bord et $\gamma\in C^\infty(M)$, 
$\rho\in C^\infty(\partial M)$ deux fonctions densités strictement positives 
sur $M$ et $\partial M$ respectivement (on peut travailler avec des 
hypothèses de régularité plus faible sur les densités mais ça n'est pas
crucial dans la suite).
Le problème aux valeurs propres de Steklov consiste à résoudre l'équation,
d'inconnues $\sigma\in\R$ et $f:\overline M\to\R$,
\begin{equation}
\left\{\begin{array}{ll}
\divergence(\gamma\nabla f)=0 & \textrm{dans }M\\
\gamma\frac{\partial f}{\partial \nu}=\sigma \rho f & \textrm{sur }\partial M
\end{array}\right.
\end{equation}
où $\nu$ est un vecteur unitaire sortant normal au bord. On parle de 
problème de Steklov homogène quand $\gamma\equiv1$ et $\rho\equiv1$, et le
cas $\gamma\not\equiv1$ se rattache au problème de Calder\'on. 
L'ensemble des
réels $\sigma$ solutions du problème forme un spectre discret positif
noté
\begin{equation}
0=\sigma_0(M,g,\rho,\gamma)<\sigma_1(M,g,\rho,\gamma)\leq
\sigma_2(M,g,\rho,\gamma)\ldots
\end{equation}
On omettra les références à $\rho$ et $\gamma$ quand ces densités
sont uniformément égales à~1.

On s'intéressera à la multiplicité de ces valeurs propres, et si on se
donne une surface compacte à bord $\Sigma$, on notera $m_k(\Sigma)$ la
multiplicité maximale de $\sigma_k(\Sigma,g,\rho,\gamma)$ quand on 
fait varier $g$, $\rho$ et $\gamma$.

On établira des liens entre ce spectre et les graphes plongés dans la surface.
On introduit pour cela les deux définitions suivantes :

\begin{definition}
Soit $\Gamma$ un graphe fini et $\Sigma$ une surface compacte à bord.
Un plongement de $\Gamma$ dans $\Sigma$ sera appelé \emph{plongement propre}
si ce plongement envoie tous les sommets de $\Gamma$ sur $\partial\Sigma$.
\end{definition}
\begin{rema}
Les graphes qui se plongent proprement dans le disque sont connus
sous le nom de graphes planaires extérieurs (\emph{outerplanar graphs}).
\end{rema}

\begin{definition}
Si $\Sigma$ est une surface compacte à bord, on appelle \emph{nombre
chromatique relatif} de $\Sigma$, noté $\mathrm{Chr}_0(\Sigma)$,
la borne supérieure des nombres chromatiques des graphes finis qui admettent
un plongement propre dans $\Sigma$. 
\end{definition}

Avec ces notations, la conjecture énoncée dans \cite{ja14} peut se
reformuler ainsi (l'équivalence entre les deux énoncés est l'un
des objets de la section~\ref{relatif}, cf. remarque~\ref{rel:rem1}):
\begin{conj}\label{intro:conj}
Pour toute surface $\Sigma$ compacte à bord, on a $$m_1(\Sigma)=
\mathrm{Chr}_0(\Sigma)-1.$$
\end{conj}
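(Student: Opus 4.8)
S'agissant ici d'une conjecture, je décris le plan que je suivrais pour l'établir, en scindant l'égalité en les deux inégalités
\[
m_1(\Sigma)\geq\mathrm{Chr}_0(\Sigma)-1
\qquad\textrm{et}\qquad
m_1(\Sigma)\leq\mathrm{Chr}_0(\Sigma)-1,
\]
par analogie avec le programme de Besson et Colin de Verdière reliant la multiplicité maximale de la première valeur propre du laplacien d'une surface close à son nombre chromatique. L'inégalité minorante est de nature constructive et l'inégalité majorante de nature analytique.

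Pour la minoration, j'utiliserais le fait que, sur les surfaces, le nombre chromatique est en pratique réalisé par un graphe complet (phénomène de type Heawood--Ringel--Youngs) : je chercherais donc un plongement propre du graphe complet $K_c$, où $c=\mathrm{Chr}_0(\Sigma)$, ses $c$ sommets étant portés par $\partial\Sigma$ puisque c'est là que vit la densité $\rho$. Je construirais ensuite une famille de métriques et de densités concentrant la masse $\rho$ en petits amas autour des sommets et rendant la conductivité $\gamma$ très grande le long de tubes entourant les arêtes. Sous cette dégénérescence, le début du spectre de Steklov converge vers le spectre d'un laplacien discret pondéré sur $K_c$ ; avec des poids uniformes celui-ci s'écrit $cI-J$, dont les valeurs propres sont $0$ (simple, associée aux fonctions constantes, donc à $\sigma_0$) et $c$ (de multiplicité $c-1$). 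Un argument de perturbation, reposant sur la continuité du bas du spectre de Steklov sous la concentration, permettrait alors de réaliser $\sigma_1$ avec multiplicité $c-1=\mathrm{Chr}_0(\Sigma)-1$.

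Pour la majoration, je partirais d'un espace propre de dimension $m=m_1(\Sigma)$ associé à $\sigma_1$. Une fonction propre de Steklov étant harmonique à l'intérieur, la structure locale de son ensemble nodal est contrôlée, et le théorème de Courant, valable pour le problème de Steklov, borne par $2$ le nombre de domaines nodaux d'une fonction propre de $\sigma_1$ (première valeur propre non nulle). Les points de changement de signe sur $\partial\Sigma$, reliés par les arcs nodaux intérieurs, définissent un graphe plongé proprement dans $\Sigma$. L'idée serait de montrer qu'un espace propre de dimension $m$ impose à ce graphe nodal un nombre chromatique au moins égal à $m+1$ : en imposant $m-1$ conditions linéaires on produit toujours une fonction propre non nulle les vérifiant, ce qui contraint suffisamment les adjacences du graphe pour interdire toute coloration à $m$ couleurs, d'où $\mathrm{Chr}_0(\Sigma)\geq m+1$.

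Le principal obstacle est que ces deux inégalités ne se rejoignent pas automatiquement, et l'égalité générale reste hors de portée, exactement comme dans le cas clos. Le point le plus délicat est la majoration : extraire de la seule multiplicité un graphe nodal de nombre chromatique assez grand exige à la fois un contrôle fin de l'ensemble nodal près du bord et un argument combinatoire de non-colorabilité, qui sont le cœur de la difficulté. Je me restreindrais donc aux surfaces de petit genre, pour lesquelles $\mathrm{Chr}_0(\Sigma)$ se calcule explicitement et où la borne analytique obtenue sur $m_1(\Sigma)$ coïncide avec la multiplicité fournie par la construction ; c'est précisément dans ces cas, rassemblés dans la table de l'introduction, que la conjecture peut être confirmée.
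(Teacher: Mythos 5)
Votre plan reproduit pour l'essentiel la structure de l'article : seule l'inégalité $m_1(\Sigma)\geq\mathrm{Chr}_0(\Sigma)-1$ est démontrée en toute généralité (c'est le théorème~\ref{intro:thpresc}), et l'égalité n'est confirmée que sur les surfaces de petit genre où la borne supérieure analytique rejoint la valeur de $\mathrm{Chr}_0$. Votre construction pour la minoration (plongement propre de $K_c$ avec sommets sur le bord, dégénérescence de la métrique et des densités faisant converger le début du spectre de Steklov vers celui du laplacien combinatoire $cI-J$) est bien celle de la section~\ref{presc}. Le point faible de votre rédaction est l'étape de perturbation finale : la seule continuité du bas du spectre ne suffit pas, car une valeur propre multiple du problème limite se scinde génériquement sous perturbation ; il faut invoquer la stabilité de la multiplicité au sens de la transversalité d'Arnol'd, c'est-à-dire le fait que $\mu(K_n)=n-1$ pour l'invariant de Colin de Verdière, ce qui est précisément la formulation retenue par le théorème~\ref{presc:th}. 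Notez aussi que le fait que $\mathrm{Chr}_0(\Sigma)$ soit réalisé par un graphe complet n'est pas un simple analogue de Heawood--Ringel--Youngs à admettre : c'est un énoncé du théorème~\ref{rel:th1}, démontré via la caractérisation des graphes critiques plongés.

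Pour la majoration, la piste que vous esquissez (extraire de l'espace propre un graphe nodal proprement plongé de nombre chromatique au moins $m+1$) n'est pas celle suivie ici, et vous avez raison de la juger hors de portée ; en l'état elle ne constitue pas un argument. L'article majore $m_1$ directement par des méthodes topologiques sur l'ensemble nodal (comptage d'Euler--Poincaré à la Nadirashvili pour le théorème~\ref{intro:bornes}, méthode de Sévennec pour le théorème~\ref{intro:sev}, déformations à la Besson en petit genre), puis calcule $\mathrm{Chr}_0$ de façon purement combinatoire (section~\ref{relatif}) et constate la coïncidence des deux nombres dans les cas de la table~\ref{intro:table}. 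Votre plan est donc conforme dans ses grandes lignes à ce qui est effectivement établi, à condition de remplacer l'argument de continuité spectrale par l'argument de stabilité et de renoncer, comme le fait l'article, à une preuve générale de la majoration.
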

Cette conjecture adapte au spectre de Steklov une conjecture analogue 
énoncée par Y.~Colin de~Verdière dans \cite{cdv87} et reliant la
multiplicité de la 2\ieme{} valeur propre des opérateurs de Schrödinger
et le nombre chromatique usuel.

Enfin, on utilisera les notations suivantes : $\mathbb S^2$, $\mathbb D^2$, 
$\mathbb P^2$, $\mathbb M^2$, $\mathbb T^2$ et $\mathbb K^2$ désigneront 
respectivement la sphère de dimension~2, le disque, le plan projectif, 
le ruban de Möbius, le tore et la bouteille de Klein. Si $\Sigma$ est une 
surface close, on désignera par $\Sigma_p$ la surface $\Sigma$ privée de 
$p$ disques disjoints, où $p$ est un entier strictement positif. Par exemple, 
on a $\mathbb M^2=\mathbb P^2_1$ et $\mathbb D^2=\mathbb S^2_1$.

\subsection{Résultats}

Un premier résultat de cet article est de démontrer qu'on a au moins
une inégalité dans l'égalité de la conjecture~\ref{intro:conj}:
\begin{theo}\label{intro:thpresc}
Pour toute surface $\Sigma$ une surface compacte à bord, on a
$$m_1(\Sigma)\geq\mathrm{Chr}_0(\Sigma)-1.$$
\end{theo}

On va s'attacher ensuite à améliorer les bornes connues sur la multiplicité
des valeurs propres, en commençant par considérer une surface quelconque. 
Rappelons d'abord le résultat de \cite{kkp12}, qui est la meilleure
majoration connue pour une surface $\Sigma$ et une valeur propre
$\sigma_k$ quelconques (elle est démontré dans \cite{kkp12} pour
$\gamma\equiv1$ mais sans hypothèse de régularité sur $\rho$):
\begin{theo}[\cite{kkp12}]\label{intro:kkp}
Si $\Sigma$ est une surface compacte à bord de caractéristique d'Euler
$\chi$ et dont le bord possède $l$ composantes connexes, alors
\begin{equation}\label{intro:kkp1}
m_k(\Sigma)\leq 2k-2\chi-2l+5
\end{equation}
et
\begin{equation}\label{intro:kkp2}
m_k(\Sigma)\leq k-2\chi+4
\end{equation}
pour tout $k\geq1$, cette dernière égalité étant stricte pour les
surfaces non simplement connexes.

\end{theo}
Les majorations de multiplicité que nous allons montrer sont de plusieurs 
types. Dans les deux théorèmes qui suivent, nous donnons deux majorations 
de $m_k$ 
sur n'importe quelle surface: le premier précise l'inégalité 
(\ref{intro:kkp2}) en utilisant les outils développés dans \cite{fs12}, 
le deuxième théorème est une application des techniques
développées par B.~Sévennec (\cite{se94}, \cite{se02}) pour majorer 
la multiplicité de la première valeur propre du laplacien.
Nous donnerons ensuite des majorations spécifiques aux premières valeurs 
propres des surfaces de petit genre.
\begin{theo}\label{intro:bornes}
Si $\Sigma$ est une surface compacte à bord de caractéristique d'Euler
$\chi$ et dont le bord possède $l$ composantes connexes, alors 
pour tout $k\geq1$ :
\begin{equation}\label{intro:borne1}
m_k(\Sigma)\leq k-2\chi+3.
\end{equation}
\end{theo}
\begin{rema}
La majoration~(\ref{intro:borne1}) est optimale pour $k=1$ sur le disque
et le ruban de Möbius, et dans les deux cas l'égalité est atteinte par un
problème homogène, c'est-à-dire avec $\rho\equiv1$ et $\gamma\equiv1$ 
(pour la métrique canonique dans le cas du disque et
pour une métrique $S^1$-invariante construite dans~\cite{fs12} pour 
le ruban de Möbius). 
\end{rema}
\begin{rema}
L'inégalité (\ref{intro:borne1}) étend les cas d'inégalité stricte 
de (\ref{intro:kkp2}) à toutes les surfaces. Les techniques de \cite{fs12} 
qui autorisent cette démonstration unifiée permettent aussi les raffinements
obtenus dans les théorèmes~\ref{intro:thp} et~\ref{intro:thdisque}.
\end{rema}
Contrairement au théorème~\ref{intro:bornes} et aux travaux~\cite{fs12},
\cite{kkp12} et~\cite{ja14}, le théorème qui suit n'utilise pas
les résultats de Cheng \cite{ch76} sur la structure de l'ensemble nodal
des fonctions propres mais les techniques topologiques de B.~Sévennec, ce qui
autorise des hypothèses de régularité beaucoup plus faibles (voir
le théorème~\ref{bornes:sev} et la remarque~\ref{bornes:reg}).
\begin{theo}\label{intro:sev}
Soit $\Sigma$ une surface compacte à bord de caractéristique d'Euler
$\chi$ et dont le bord possède $l$ composantes connexes.
Si $\chi+l\leq-1$, alors
\begin{equation}\label{intro:borne2}
m_1(\Sigma)\leq 5-\chi-l.
\end{equation}
Si $l=1$ et $\chi\leq-2$, alors
\begin{equation}\label{intro:borne3}
m_1(\Sigma)\leq 3-\chi.
\end{equation}
Ces majorations restent valables si $g$, $\rho$ et $\gamma$ sont $L^\infty$
sans régularité supplémentaire.
\end{theo}
\begin{rema}
Si on note $\bar\chi$ la caractéristique d'Euler de la surface close 
obtenue en collant un disque sur chaque bord de $\Sigma$, la 
majoration~(\ref{intro:borne2}) s'écrit $m_1(\Sigma)\leq 5-\bar\chi$. 
Elle est optimale pour $\bar\chi=-1$ et $l\geq2$, et $\bar\chi=-2$ 
ou~$-3$ et $l\geq3$. L'inégalité~(\ref{intro:borne3}) peut s'écrire
$m_1(\Sigma)\leq4-\bar\chi$, elle est optimale pour $\bar\chi=1,2,3$.
Les valeurs propres multiples correspondantes sont données par le 
théorème~\ref{intro:thpresc} et le calcul des nombres chromatiques
relatifs fait dans la section~\ref{relatif}.
\end{rema}

Dans les trois théorèmes qui suivent, on donne des bornes spécifiques
aux surfaces de petit genre. Le premier adapte des résultats analogues
obtenus par G.~Besson \cite{be80} et Y.~Colin de Verdière \cite{cdv87} 
pour le laplacien.
\begin{theo}\label{intro:thl}
$m_1(\mathbb T^2_p)=6$ pour $p\geq3$ et 
$m_1(\mathbb K^2_p)=5$ pour $p\geq2$.
\end{theo}
Avec des techniques similaires mais en utilisant des arguments spécifiques 
au spectre de Steklov, on améliore deux bornes:
\begin{theo}\label{intro:thp}
 $m_1(\mathbb P^2_2)=4$ et $m_1(\mathbb T^2_1)=5$. 
\end{theo}
Enfin, on majore la multiplicité de la 2\ieme{} valeur propre du disque au
moyen d'une stratégie indépendante de toutes celles évoquées jusqu'ici.
\begin{theo}\label{intro:thdisque}
$m_2(\mathbb D^2)=2$.
\end{theo}
\begin{rema}
On sait déjà que $m_1(\mathbb D^2)=2$. Par ailleurs, M.~Karpukhin, G.~Kokarev
et I.~Polterovich montrent dans \cite{kkp12} que $m_k(\mathbb D^2)=2$ si 
$k$ est assez grand. Ces résultats laissent penser qu'on a 
$m_k(\mathbb D^2)=2$ quel
que soit $k$. Cette multiplicité est atteinte par la
métrique canonique.
\end{rema}

La table~\ref{intro:table} rassemble les valeurs connues de $m_1(\Sigma_p)$,
où $\Sigma$ est une surface close (dans cette table, la notation $\#n\Sigma$ 
désigne la somme connexe de $n$ copies de la surface $\Sigma$). Pour les 
surfaces $\Sigma$ concernées, 
la valeur de $m_1$ ne dépend pas de $p$ quand p$\geq4$.
Rappelons que la majoration de $\mathbb S^2_p$ pour $p\geq2$ est montrée
dans \cite{fs12}, \cite{ja14} et \cite{kkp12} et l'égalité découle du
théorème~\ref{intro:thpresc} pour $p$ quelconque et d'un exemple 
construit dans \cite{fs11} pour le cas $p=2$.
\begin{table}[h]
\begin{center}
\renewcommand{\arraystretch}{1.5}
\begin{tabular}{c|c|c|c|c|c|}
$p=$ & 1 & 2 & 3 & 4 \\\hline\hline
$\mathbb S^2$ & 2 & 3 & 3 & 3 \\\hline
$\mathbb P^2$ & 4 & 4 & 5 & 5 \\\hline
$\mathbb K^2$ & ? & 5 & 5 & 5 \\\hline
$\mathbb T^2$ & 5 & ? & 6 & 6 \\\hline
$\#3\mathbb P^2$ & 5 & 6 & 6 & 6 \\\hline
$\mathbb T^2\#\mathbb T^2$ & 6 & 7 & 7 & 7 \\\hline
$\#4\mathbb P^2$ & 6 & ? & 7 & 7 \\\hline
$\#5\mathbb P^2$ & 7 & ? & 8 & 8 \\\hline
\end{tabular}
\medskip
\renewcommand{\arraystretch}{1}
\caption{Valeur de $m_1$ sur les surfaces de petit genre}
\label{intro:table}
\end{center}
\end{table}
Toutes les bornes contenues dans la table~\ref{intro:table} sont bien
conformes à la conjecture~\ref{intro:conj} (comparer avec les nombres
chromatiques relatifs indiqués dans la table~\ref{rel:table},
section~\ref{relatif}).

Les majorations de multiplicité pour la première valeur propre du laplacien
induisent des critères de plongement de graphes dans les surfaces. Dans
le cas du spectre de Steklov, on peut en tirer des critères de plongement
propre dans les surfaces à bord. Cet aspect sera précisé dans la 
section~\ref{presc} (corollaire~\ref{presc:crit}). Comme par ailleurs
on sait caractériser spectralement les graphes qui admettent un plongement
non entrelacé dans $\R^3$, on obtient, par un cheminement inattendu, le critère
de plongement qui suit. Rappelons qu'un plongement d'un graphe dans $\R^3$
est non entrelacé si tout ensemble de cycles disjoints forme un entrelacs
trivial.
\begin{cor}\label{intro:plongement}
Si un graphe admet un plongement propre dans $\mathbb M^2$ ou $\mathbb P^2_2$,
alors il admet un plongement non entrelacé dans $\R^3$.
\end{cor}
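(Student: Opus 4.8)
The plan is to deduce the statement from the spectral characterisation of linkless embeddability. By the theorem of Lov\'asz and Schrijver, a finite graph $G$ admits a linkless embedding in $\R^3$ if and only if its Colin de Verdi\`ere invariant satisfies $\mu(G)\le4$. It therefore suffices to prove that every graph admitting a proper embedding in $\mathbb M^2$ or in $\mathbb P^2_2$ has $\mu(G)\le4$.

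The link with the Steklov spectrum is the proper-embedding criterion extracted from the multiplicity bounds. I expect Corollary~\ref{presc:crit} to assert that if $G$ admits a proper embedding in a compact surface $\Sigma$, then $\mu(G)\le m_1(\Sigma)$. I would obtain this in the spirit of Theorem~\ref{intro:thpresc}: given a proper embedding of $G$, one upgrades an optimal Colin de Verdi\`ere matrix for $G$ — a generalised Laplacian with the prescribed sign pattern, a single negative eigenvalue, and the Strong Arnold Property — into a genuine metric Steklov problem on $\Sigma$ whose discretised Dirichlet-to-Neumann operator keeps the same kernel dimension, so that $\sigma_1$ acquires multiplicity $\mu(G)$ and hence $m_1(\Sigma)\ge\mu(G)$. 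As a reassuring check, for the disk this reproduces the classical equivalence between outerplanarity and $\mu(G)\le2=m_1(\mathbb D^2)$.

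It then remains to feed in the two numerical inputs. The remark following Theorem~\ref{intro:bornes} gives $m_1(\mathbb M^2)=4$, and Theorem~\ref{intro:thp} gives $m_1(\mathbb P^2_2)=4$. A graph properly embeddable in either surface thus satisfies $\mu(G)\le4$, and the Lov\'asz--Schrijver characterisation produces the desired linkless embedding in $\R^3$.

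The real content, and the step I expect to be the main obstacle, is precisely Corollary~\ref{presc:crit}, namely the passage from a combinatorial proper embedding to a lower bound on the continuous multiplicity. Two points demand care. First, one must verify that the invariant governing the realisable multiplicity is exactly the Colin de Verdi\`ere invariant $\mu$ appearing in the Lov\'asz--Schrijver theorem, Strong Arnold Property included, rather than some weaker spectral quantity, since any discrepancy would sever the equivalence with linklessness. Second, one must ensure that the eigenfunctions produced by the construction genuinely realise the first nonzero Steklov value and do not drift higher up the spectrum; this is where Cheng's nodal-domain bounds \cite{ch76} and the Courant-type counting used throughout the paper come in. Once these are settled, the corollary is immediate from the three displayed inputs.
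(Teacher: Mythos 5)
Your argument is exactly the paper's: combine Corollaire~\ref{presc:crit} (itself a consequence of the degeneration-plus-stability construction of Théorème~\ref{presc:th}) with the values $m_1(\mathbb M^2)=m_1(\mathbb P^2_2)=4$ and the spectral characterisation of linkless graphs from Théorème~\ref{plongement:th}, and your identification of where each numerical input comes from matches the text. The only quibble is attributional: the implication actually used, namely $\mu(\Gamma)\le4\Rightarrow$ linkless, rests on Robertson--Seymour--Thomas and the computation of $\mu$ on the Petersen family, Lov\'asz--Schrijver providing the converse.
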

\begin{rema}
L'hypothèse que les sommets du graphe soient sur le bord 
est indispensable ; sans elle on peut trouver facilement des contre-exemples,
à commencer par le graphe complet à 6 sommets.
\end{rema}
\begin{rema}
Si la conjecture~\ref{intro:conj} est vraie pour $\mathbb K^2_1$, alors
le corollaire~\ref{intro:plongement} s'applique aussi à cette surface. 
\end{rema}
On discutera dans la section~\ref{presc} (remarque~\ref{plongement:rem}) 
l'existence de démonstrations
plus élémentaires de ce corollaire.

 Dans la section~\ref{rappels}, nous rappellerons quelques résultats 
techniques concernant le spectre de Steklov et les opérateurs sur les graphes. 
La section~\ref{relatif}
sera consacrée à l'étude du nombre chromatique relatif : on justifiera 
en particulier cette dénomination et on calculera sa valeur sur les 
surfaces de petit genre. On verra dans la section~\ref{presc} comment
construire des valeurs propres multiples sur une surface à l'aide d'un
graphe proprement plongé et on en déduira le théorème~\ref{intro:thpresc}
et le corollaire~\ref{intro:plongement}. Enfin, on démontrera dans la 
section~\ref{bornes} les différentes bornes sur la multiplicité.

Je remercie Y.~Colin de Verdière et I.~Polterovich pour leurs commentaires
sur la première version de cet article, ainsi qu'un rapporteur anonyme
dont les remarques ont permis d'améliorer le texte.

\section{Rappels}
\subsection{Le spectre de Steklov}\label{rappels}
On va rappeler ici quelques propriétés du spectre de Steklov que nous 
utiliserons. Il est l'ensemble des réels $\sigma$ pour lesquels le
problème

\begin{equation}
\left\{\begin{array}{ll}
\divergence(\gamma\nabla f)=0 & \textrm{dans }M\\
\gamma\frac{\partial f}{\partial \nu}=\sigma \rho f & \textrm{sur }\partial M
\end{array}\right.,
\end{equation}
où $\nu$ est un vecteur unitaire sortant normal au bord, admet
des solutions non triviales. Les variétés que nous considérerons auront
un bord $C^1$ par morceaux, ce qui est suffisant pour que le problème soit
bien défini.

Le spectre de Steklov $(\sigma_k(M,g,\gamma,\rho))_k$ est le spectre 
d'un opérateur Dirichlet-Neumann $H^1(\partial M)
\to L^2(\partial M)$ défini par $\Lambda_{\rho,\gamma} u=\frac\gamma\rho
\frac{\partial\mathcal H_\gamma u}{\partial\nu}$, où $\mathcal H_\gamma u$ est 
le prolongement harmonique de $u$ pour la densité $\gamma$, c'est-à-dire que 
$\divergence(\gamma\nabla(\mathcal H_\gamma u))=0$. Il est auto-adjoint 
pour la norme de Hilbert $\|u\|^2=\int_{\partial M}u^2\rho\,\de v_g$ (voir
\cite{ba80}, \cite{su90} et \cite{uh09} ; ces références traitent les
cas $\rho\equiv 1$ ou $\gamma\equiv 1$ mais l'adaptation au cas général 
est aisée).

Pour montrer le théorème~\ref{intro:thpresc} on utilisera la caractérisation
variationnelle suivante du spectre :

\begin{equation}\label{rappels:minmax}
\sigma_k(M,g,\rho)=\inf_{V_{k+1}\in H^1(M)}
\sup_{f\in V_{k+1}\backslash\{0\}}
\frac{\int_M|\de f|^2\gamma\ \de v_g}%
{\int_{\partial M}f^2\rho\ \de v_g},
\end{equation}
où $V_k$ parcours les sous-espaces de dimension~$k$ de l'espace 
de Sobolev $H^1(M)$. 

 On aura aussi recours à un problème de Steklov avec condition de Neumann
sur une partie du bord. Si on partitionne $\partial M$ en deux domaines
(ou unions finies de domaines) $\partial_S M$ et $\partial_N M$, ce problème
consiste à considérer la variante suivante du problème de Steklov:
\begin{equation}\label{rappels:eqsn}
\renewcommand{\arraystretch}{1.5}
\left\{\begin{array}{ll}
\divergence(\gamma\nabla f)=0& \textrm{dans }M\\
\gamma\frac{\partial f}{\partial \nu}=\sigma \rho f & \textrm{sur }
\partial M_S\\
\frac{\partial f}{\partial \nu}=0 & \textrm{sur }\partial M_N
\end{array}\right.
\renewcommand{\arraystretch}{1}
\end{equation}
c'est-à-dire qu'on demande à la fonction harmonique $f$ de vérifier la
condition de Neumann sur $\partial M_N$. Cette condition revient à poser
$\rho\equiv0$ sur $\partial M_N$. Le spectre obtenu est celui
d'un opérateur Dirichlet-Neumann défini sur $\partial_S M$. On notera
$(\sigma_k(M,\partial M_S,g,\gamma,\rho))_k$ son spectre.

\subsection{Opérateurs sur les graphes}\label{rappels:graphes}
On rappelle ici la définition des laplaciens et des opérateurs de Schrödinger
sur les graphes, en se référant par exemple à \cite{cdv98}.

Soit $\Gamma$ un graphe fini et $S$ son ensemble de sommets.
La géométrie du graphe est déterminée par la donnée,
pour chaque arête $a$ reliant deux sommets $x$ et $y$, d'un réel $l_a$ qui
représente la longueur de cette arête. L'ensemble des $l_a$
sera appelé une métrique sur le graphe $\Gamma$. Le laplacien agissant
sur les fonctions $f:S\to S$ est alors défini par
\begin{equation}
\Delta f (x)=\sum_{a\sim x}\frac{f(x)-f(y_a)}{l_a},
\end{equation}
où la somme porte sur l'ensemble des arêtes d'extrémité $x$ et où $y_a$ 
désigne l'autre extrémité de $a$. Cet opérateur
a un spectre positif, sa plus petite valeur propre est simple et vaut~0.

Si on se donne une fonction $V$ sur l'ensemble des sommets, on peut
définir un opérateur de Schrödinger $H_V$ sur $\Gamma$ par 
$H_Vf(x)=\Delta f(x)+V(x)f(x)$. Une propriété de ces opérateurs nous 
sera utile : si la première valeur propre de $H_V$ est nulle,
alors son spectre est celui de la forme quadratique d'un laplacien
relativement à une norme de Hilbert $|f|=\sum_{x\in S} \mu_x f^2(x)$.
Les coefficients $\mu_x$ s'interprètent comme une mesure sur l'espace
des sommets du graphe.

\section{Nombre chromatique relatif d'une surface à bord}\label{relatif}
On va montrer dans cette section différents résultats concernant le
nombre chromatique relatif des surfaces à bord, et en particulier calculer
cet invariant sur les surfaces de petit genre.

Rappelons d'abord que le nombre chromatique d'un surface close $\Sigma$, 
c'est-à-dire la borne supérieure des nombres chromatiques des graphes 
qu'on peut plonger dans $\Sigma$, est donné par la formule 
\begin{equation}
\mathrm{Chr}(\Sigma)=\lfloor\frac{7+\sqrt{49-24\chi(\Sigma)}}2\rfloor
\end{equation}
sauf pour la bouteille de Klein, pour laquelle $\mathrm{Chr}(\mathbb K^2)=6$.
L'étude de cet invariant, amorcée par P. J. Heawood \cite{he90} et L. Heffter 
\cite{he91} a été achevée par G.~Ringel et J.~Youngs \cite{ry68} en genre
non nul, et K.~Appel et W.~Haken \cite{ah76} pour la sphère. On peut consulter
\cite{ry68} pour un survol historique.

Notre premier résultat sur le nombre chromatique relatif est un encadrement
analogue à la majoration du nombre chromatique obtenue par Heawood dans
\cite{he90}. Rappelons que si $\Sigma$ est une surface close, on note
$\Sigma_p$ la surface obtenue en lui enlevant $p$ disques disjoints.
\begin{theo}\label{rel:th1}
Le nombre chromatique relatif $\mathrm{Chr}_0(\Sigma_p)$ possède les 
propriétés suivantes :
\begin{enumerate}
\item[(a)] $\mathrm{Chr}_0(\Sigma_p)$ est une
fonction croissante de $p$ et vérifie les inégalités
\begin{equation}
\mathrm{Chr}(\Sigma)-1\leq\mathrm{Chr}_0(\Sigma_p)\leq
\inf\left(\mathrm{Chr}(\Sigma),\frac{5+\sqrt{25-24\chi(\Sigma)+24p}}2\right);
\end{equation}
\item[(b)] $\mathrm{Chr}_0(\Sigma_p)$ est le nombre de sommets du plus
grand graphe complet proprement plongeable dans $\Sigma_p$;
\item[(c)] $\mathrm{Chr}_0(\Sigma_1)=\mathrm{Chr}(\Sigma)-1$ et
$\mathrm{Chr}_0(\Sigma_p)=\mathrm{Chr}(\Sigma)$ si
$p\geq(\mathrm{Chr}(\Sigma)-1)/2$.
\end{enumerate}
\end{theo}
\begin{rema}\label{rel:rem1}
Le fait que $\mathrm{Chr}_0(\Sigma_p)$ soit réalisé par un graphe complet
établit l'équivalence entre la conjecture~\ref{intro:conj} et la 
conjecture énoncée dans \cite{ja14}.
\end{rema}
Ce théorème permet de calculer la valeur exacte du nombre chromatique
relatif sur un certain nombre de surfaces. On va compléter cette liste
par des surfaces de petit genre. D'abord dans le cas où la caractéristique
d'Euler de $\Sigma$ vérifie $\chi(\Sigma)\geq-1$. On aura alors 
la valeur de $\mathrm{Chr}_0(\Sigma_p)$ pour tout $p$: 
\begin{prop}\label{rel:th2}
$\mathrm{Chr}_0(\mathbb P^2_2)=5$, $\mathrm{Chr}_0(\mathbb K^2_2)=6$ et
$\mathrm{Chr}_0(\#3\mathbb P^2_2)=7$.
\end{prop}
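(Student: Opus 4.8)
The plan is to combine Theorem~\ref{rel:th1}, which identifies $\mathrm{Chr}_0(\Sigma_p)$ with the order of the largest complete graph properly embeddable in $\Sigma_p$ and confines it between $\mathrm{Chr}(\Sigma)-1$ and the Heawood-type upper bound (including the infimum with $\mathrm{Chr}(\Sigma)$), with explicit embeddings and one non-embeddability argument. Using $\mathrm{Chr}(\mathbb P^2)=\mathrm{Chr}(\mathbb K^2)=6$ and $\mathrm{Chr}(\#3\mathbb P^2)=7$, the two bounds leave only $\mathrm{Chr}_0(\mathbb P^2_2)\in\{5,6\}$, $\mathrm{Chr}_0(\mathbb K^2_2)\in\{5,6\}$ and $\mathrm{Chr}_0(\#3\mathbb P^2_2)\in\{6,7\}$. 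It therefore remains to decide whether the complete graph of the larger order embeds properly: $K_6$ in $\mathbb P^2_2$ and in $\mathbb K^2_2$, and $K_7$ in $\#3\mathbb P^2_2$.

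The common tool is a \emph{capping correspondence}. Capping the two boundary circles of $\Sigma_2$ with disks turns a proper embedding of $K_n$ into an embedding of $K_n$ in the closed surface $\Sigma$ together with two disjoint closed disks meeting the graph only along their boundaries, whose boundaries together pass through all $n$ vertices; conversely, deleting two such disks recovers a proper embedding. Since the interior of each capping disk is disjoint from the $1$-skeleton, it lies inside a single face, so its boundary passes through at most as many vertices as that face has. Hence such a configuration amounts to a choice of two faces of an embedding of $K_n$ in $\Sigma$ with disjoint vertex sets whose union is all of the vertices.

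For $\mathbb P^2_2$ this correspondence yields the obstruction, which I expect to be the main difficulty, since the Heawood-type bound alone only gives $6$. Any embedding of $K_6$ in $\mathbb P^2$ is cellular --- otherwise $K_6$ would embed in the sphere, which is impossible --- and then Euler's formula ($F=10$, total face degree $2E=30$) forces all ten faces to be triangles; this is the \emph{hemi-icosahedron}, the unique $6$-vertex triangulation of $\mathbb P^2$. By the correspondence, a proper embedding of $K_6$ in $\mathbb P^2_2$ would require two triangular faces whose vertex sets partition the six vertices, i.e. two complementary facial triangles. Writing the hemi-icosahedron as the antipodal quotient of the icosahedral triangulation, I will list its ten faces and check directly that the complement of every face is a non-face, so that no such pair exists. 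This forces $\mathrm{Chr}_0(\mathbb P^2_2)\leq5$, whence equality. The phenomenon is the exact analogue, for the relative chromatic number, of Franklin's theorem that $K_7$ does not embed in the Klein bottle.

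The two equalities $\mathrm{Chr}_0(\mathbb K^2_2)=6$ and $\mathrm{Chr}_0(\#3\mathbb P^2_2)=7$ lie above the lower bound of Theorem~\ref{rel:th1} and so require explicit proper embeddings, which the correspondence reduces to exhibiting the appropriate complementary faces. For $\mathbb K^2_2$, viewed as an annulus with two crosscaps, I will place the triangle $\{1,2,3\}$ on one boundary circle and $\{4,5,6\}$ on the other, their edges lying along the boundary; the nine remaining edges form a $K_{3,3}$ joining the two triangles, which I route through the crosscaps so that $\{1,2,3\}$ and $\{4,5,6\}$ become faces. For $\#3\mathbb P^2_2$, viewed as an annulus with three crosscaps, I will place the triangle $\{1,2,3\}$ on one circle and the quadrilateral $4$-$5$-$6$-$7$ on the other, and route the remaining fourteen edges (the two diagonals of the quadrilateral and the twelve edges of $K_{3,4}$) through the three crosscaps, leaving a triangular and a quadrilateral face that together carry all seven vertices; the Euler and crosscap counts for $K_7$ (of crosscap number exactly three) show this is feasible. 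Checking that these interior routings are genuinely crossing-free is the technical part of the two constructions, and is most economically recorded on a polygonal model of the surface or by a rotation system whose faces are then traced.
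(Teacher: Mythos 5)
Your plan is sound, and for the one inequality that requires real work --- $\mathrm{Chr}_0(\mathbb P^2_2)\leq5$ --- it is genuinely different from the paper's. The paper gets this bound spectrally: it first proves $m_1(\mathbb P^2_2)=4$ by a nodal-line argument (théorème~\ref{intro:thp}) and then applies the inequality $\mathrm{Chr}_0(\Sigma)-1\leq m_1(\Sigma)$ from the théorème~\ref{intro:thpresc}. Your hemi-icosahedron argument replaces all of that by elementary combinatorics: every embedding of $K_6$ in $\mathbb P^2$ is cellular (otherwise $K_6$ would be planar), Euler's formula forces a ten-face triangulation, and in the hemi-icosahedron no face has a facial complement, so the capping correspondence rules out a proper embedding of $K_6$ in $\mathbb P^2_2$. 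This is correct and arguably buys something: it makes the value of $\mathrm{Chr}_0(\mathbb P^2_2)$ independent of the spectral machinery, whereas the paper deliberately exploits the spectral route (the same mechanism behind its corollaire~\ref{intro:plongement}). The one ingredient you should cite or prove is the uniqueness of the embedding of $K_6$ in $\mathbb P^2$ up to homeomorphism --- Euler's formula alone only says ``some triangulation,'' and your face-by-face check needs to apply to all of them. Your capping correspondence is exactly what the paper does implicitly when it removes faces with complementary vertex sets, and your reduction of the two positive cases to finding such pairs of faces is the right one.

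Where the proposal falls short of a proof is where you say it does. For $\mathbb K^2_2$ and $\#3\mathbb P^2_2$ you only sketch routings (of the $K_{3,3}$ between the two boundary triangles, and of the fourteen remaining edges of $K_7$) through the crosscaps and defer the verification that they are crossing-free and leave the required faces; until those routings are actually traced by a rotation system or a polygonal model, these two equalities are plans rather than proofs. The paper settles them with explicit constructions: for $\mathbb K^2_2$, a concrete drawing of $K_6$ in the Klein bottle having the complementary faces $(125)$ and $(346)$, and for $\#3\mathbb P^2_2$, a $K_5$ in the Möbius band with two faces opened up, a pair of pants glued along the resulting boundary circles, and two further vertices placed on its third boundary component. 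If you carry out your routings, your argument will be complete and, for the projective-plane case, more self-contained than the paper's.
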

On va aussi calculer quelques nombres chromatiques relatifs supplémentaires
dans le cas où $-2\geq\chi(\Sigma)\geq-7$. La liste ne sera pas exhaustive
mais elle contiendra tous les cas pour lesquels on sait démontrer
la conjecture~\ref{intro:conj} :
\begin{prop}\label{rel:th3}
Soit $\Sigma$ une surface close. 
\begin{enumerate}
\item $\mathrm{Chr}_0(\#2\mathbb T^2_2)=\mathrm{Chr}_0(\#4\mathbb P^2_3)=8$.
\item Si $\chi(\Sigma)=-3$ ou $-4$, alors $\mathrm{Chr}_0(\Sigma_3)=9$.
\item Si $\chi(\Sigma)=-5$ alors $\mathrm{Chr}_0(\Sigma_4)=10$.
\item $\mathrm{Chr}_0(\#4\mathbb T^2_3)=10$.
\end{enumerate}
\end{prop}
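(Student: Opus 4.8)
Le plan est de séparer dans chaque cas la majoration, commune à tous les énoncés, de la minoration, qui demande une construction explicite. Pour la majoration, j'invoquerais le théorème~\ref{rel:th1}, qui affirme que $\mathrm{Chr}_0(\Sigma_p)$ est le nombre de sommets du plus grand graphe complet $K_n$ (le graphe complet à $n$ sommets) admettant un plongement propre dans $\Sigma_p$, et qu'il vérifie
\[
\mathrm{Chr}_0(\Sigma_p)\leq\inf\Bigl(\mathrm{Chr}(\Sigma),\frac{5+\sqrt{25-24\chi(\Sigma)+24p}}2\Bigr).
\]
Je calculerais ce majorant dans chaque cas pour constater qu'il vaut exactement la valeur annoncée : pour $\#2\mathbb T^2_2$ on a $\chi=-2$, $p=2$ et $25+48+48=121=11^2$, d'où la borne $(5+11)/2=8=\mathrm{Chr}(\#2\mathbb T^2)$ ; pour $\chi(\Sigma)=-3$ (surface $\#5\mathbb P^2$) et $p=3$ on a $25+72+72=169=13^2$, d'où $(5+13)/2=9=\mathrm{Chr}(\Sigma)$ ; dans les quatre cas restants c'est le terme $\mathrm{Chr}(\Sigma)$ qui est le plus petit des deux et fournit directement $8$, $9$, $10$ et~$10$. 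Il resterait donc partout à établir la minoration, c'est-à-dire à plonger proprement $K_n$ avec $n=\mathrm{Chr}(\Sigma)$ dans $\Sigma_p$.

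Pour la minoration, je noterais d'abord que dans chacun de ces cas $p$ est strictement inférieur au seuil $(\mathrm{Chr}(\Sigma)-1)/2$ à partir duquel le théorème~\ref{rel:th1} donne déjà l'égalité $\mathrm{Chr}_0(\Sigma_p)=\mathrm{Chr}(\Sigma)$ : une construction plus fine est donc indispensable. Je partirais d'un plongement de $K_n$ dans la surface close $\Sigma$ (qui existe car $\mathrm{Chr}(\Sigma)=n$, par les travaux de Ringel et Youngs \cite{ry68}, et qui est une triangulation, ou presque, lorsque $\Sigma$ réalise le genre de $K_n$), puis j'en déduirais un plongement propre dans $\Sigma_p$ en retirant $p$ disques ouverts disjoints dont les cercles de bord passent par les sommets. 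Le procédé le plus économique est de retirer des \emph{faces} du plongement : retirer une face triangulaire place $3$ sommets sur un même cercle de bord, une face quadrilatère en place~$4$, et on peut compléter par des voisinages d'arêtes ($2$ sommets) ou de sommets ($1$ sommet). Là où la construction générale sous-jacente au théorème~\ref{rel:th1} recouvre les sommets deux par deux et consomme de l'ordre de $n/2$ disques, le recouvrement par faces en traite $3$ ou $4$ à la fois et atteint $\mathrm{Chr}(\Sigma)$ avec strictement moins de disques.

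Concrètement, lorsque $\chi(\Sigma)=-3$ le graphe $K_9$ triangule $\Sigma=\#5\mathbb P^2$ ($24$ faces triangulaires) : j'exhiberais trois faces deux à deux disjointes dont les sommets partitionnent les neuf sommets, et retirerais les disques correspondants pour obtenir $K_9$ proprement plongé dans $\Sigma_3$. Pour $\#2\mathbb T^2_2$, un plongement de $K_8$ dans la surface orientable de genre~$2$ possède un excès de $2$ pour la taille des faces, donc au moins une face non triangulaire ; je chercherais deux faces disjointes recouvrant les huit sommets (par exemple deux quadrilatères, ou bien un pentagone et un triangle). Pour les énoncés portant sur $K_{10}$ (cas $\chi(\Sigma)=-5$ avec $p=4$, où $K_{10}$ triangule $\#7\mathbb P^2$, et $\#4\mathbb T^2_3$ avec $p=3$), je combinerais faces triangulaires et voisinages d'arêtes ou de sommets pour partitionner les dix sommets en le nombre voulu de groupes ; les cas $\chi(\Sigma)=-4$ (traité selon l'orientabilité de $\Sigma$) et $\#4\mathbb P^2_3$ se régleraient de façon analogue.

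La partie difficile sera de garantir l'existence, dans un plongement extrémal de $K_n$, d'une famille de faces deux à deux disjointes recouvrant tous les sommets. Il faudra pour cela exploiter la description combinatoire explicite des triangulations de Ringel et Youngs (systèmes de rotations issus des graphes de courants), c'est-à-dire inspecter un plongement connu, ou en construire un, dans lequel un ensemble adéquat de faces partitionne l'ensemble des sommets. Dans les cas où $K_n$ ne triangule pas $\Sigma$, une difficulté supplémentaire sera de localiser les quelques faces non triangulaires et de s'assurer qu'on peut choisir le plongement de sorte qu'elles soient à la fois disjointes et couvrantes.
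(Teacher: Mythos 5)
Votre stratégie est exactement celle du papier : la majoration provient du théorème~\ref{rel:th1} (vos évaluations numériques du majorant de Heawood relatif et de $\mathrm{Chr}(\Sigma)$ sont correctes dans chacun des six cas), et la minoration s'obtient en partant d'un plongement de $K_n$ dans la surface close $\Sigma$ puis en retirant $p$ disques le long de faces, d'arêtes ou de sommets de façon à placer tous les sommets sur le bord. Vous identifiez aussi correctement que le seuil $p\geq(\mathrm{Chr}(\Sigma)-1)/2$ du théorème~\ref{rel:th1} n'est atteint dans aucun de ces cas, de sorte qu'une construction plus économe que le regroupement des sommets deux par deux est indispensable.

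La difficulté est que cette construction est précisément le contenu de la démonstration, et vous ne la fournissez pas : vous annoncez que vous exhiberiez les faces disjointes voulues sans les exhiber, et vous reconnaissez vous-même que c'est là \emph{la partie difficile}. Ce n'est pas une vérification de routine qu'on peut laisser en suspens : l'existence d'une famille de faces deux à deux disjointes couvrant tous les sommets dépend du plongement extrémal choisi et peut échouer pour un plongement donné. Le papier le constate explicitement pour $K_8$ dans $\mathbb T^2\#\mathbb T^2$ : le plongement de Heffter de la figure~\ref{rel:fig2} ne possède pas deux quadrilatères disjoints portant les huit sommets, et le cas $p=2$ exige de passer à un autre plongement (celui de la table~(2.9), p.~23 de \cite{ri74}, dont les quadrilatères $(0246)$ et $(1357)$ conviennent). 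La démonstration du papier consiste justement en cette inspection cas par cas de plongements publiés --- faces $(129)$, $(678)$, $(345)$ de la figure~11 de \cite{co43} pour $K_9$ dans $\#5\mathbb P^2$, face hexagonale à cinq sommets distincts du plongement de Heffter de $K_9$ dans $\#3\mathbb T^2$, triangulation de Bose--Coxeter pour $K_{10}$ dans $\#7\mathbb P^2$, etc. --- et elle règle $\#6\mathbb P^2_3$ non par un nouveau plongement mais par somme connexe de $\#5\mathbb P^2_3$ avec $\mathbb P^2$, raccourci qui manque à votre plan. Tant que ces données explicites ne sont pas produites, la minoration, et donc la proposition, reste non démontrée.
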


La table~\ref{rel:table} rassemble en fonction de $\Sigma$ et $p$ 
les nombres chromatiques qu'on peut calculer à l'aide du 
théorème~\ref{rel:th1}, des propositions~\ref{rel:th2} et~\ref{rel:th3}
et en utilisant la monotonie du nombre chromatique relatif par rapport
à $p$ et par somme connexe. On se limite aux surfaces $\Sigma$ 
telles que $\chi(\Sigma)\geq-7$.
\begin{table}[h]
\begin{center}
\renewcommand{\arraystretch}{1.5}
\begin{tabular}{c|c|c|c|c|c|}
$p=$ & 1 & 2 & 3 & 4 & 5\\\hline\hline
$\mathbb S^2$ & 3 & 4 & 4 & 4 & 4\\\hline
$\mathbb P^2$ & 5 & 5 & 6 & 6 & 6\\\hline
$\mathbb K^2$ & 5 & 6 & 6 & 6 & 6\\\hline
$\mathbb T^2$ & 6 & 6 & 7 & 7 & 7\\\hline
$\chi=-1$ & 6 & 7 & 7 & 7 & 7\\\hline
$\#2\mathbb T^2$ & 7 & 8 & 8 & 8 & 8\\\hline
$\#4\mathbb P^2$ & 7 & ? & 8 & 8 & 8\\\hline
$\chi=-3$ & 8 & 8 & 9 & 9 & 9\\\hline
$\chi=-4$ & 8 & ? & 9 & 9 & 9\\\hline
$\chi=-5$ & 9 & 9 & 9 & 10 & 10\\\hline
$\#4\mathbb T^2$ & 9 & 9 & 10 & 10 & 10\\\hline
$\chi=-7$ & 9 & ? & 10 & 10 & 10\\\hline

\end{tabular}
\renewcommand{\arraystretch}{1}
\bigskip
\caption{Nombre chromatique relatif des surfaces de petit genre}
\label{rel:table}
\end{center}
\end{table}

On va montrer séparément le résultat le plus technique du
théorème~\ref{rel:th1}:
\begin{lemme}\label{rel:lem1}
$\displaystyle\mathrm{Chr}_0(\Sigma_p)\leq
\frac{5+\sqrt{25-24\chi(\Sigma)+24p}}2.$
\end{lemme}
\begin{proof}
On pose $c_p= \lfloor(5+\sqrt{25-24\chi(\Sigma)+24p})/2\rfloor$.
Étant donné un graphe $\Gamma$ plongé proprement dans $\Sigma_p$,
on va montrer qu'on peut le colorier avec $c_p$ couleurs.
Quitte à ajouter (temporairement) des sommets et des arêtes, 
on peut supposer d'une part
que toutes les faces de la décomposition de $\Sigma_p$ induite par
$\Gamma$ sont simplement connexes, et d'autre part que
$\partial\Sigma_p$ est recouvert par des arêtes. Il peut apparaître des 
arêtes multiples (plusieurs arêtes ayant les mêmes sommets) ou des boucles 
(arête reliant un sommet à lui-même) lors de cette étape mais ça n'est pas 
gênant pour la suite. On note $s$ et $a$ le
nombre de sommets et d'arêtes de $\Gamma$, $\delta$ le degré minimal
de ses sommets et $f$ le nombre de faces de la décomposition de
$\Sigma_p$ induite par $\Gamma$.

On commence par majorer $\delta$. Comme chaque face est bordée par au
moins trois arêtes et que chaque arête (sauf celles qui sont
sur le bord, qui sont en nombre égal au nombre de sommets) est adjacente 
à deux faces, on a $3f\leq2a-s$. De plus, le
degré minimal vérifie les inégalités $\delta s\leq2a$ et $\delta+1\leq s$.
Il découle alors de la formule d'Euler-Poincaré, en notant
$\chi=\chi(\Sigma)-p$ la caractéristique d'Euler de $\Sigma_p$ et
en supposant que $\delta\geq4$, que
\begin{eqnarray}
6\chi & = & 6f+6s-6a\leq 4s-2a\nonumber\\
& \leq & 4s-\delta s=(4-\delta)s\leq(4-\delta)(\delta+1)\nonumber\\
& \leq & -\delta^2+3\delta+4.
\end{eqnarray}
Comme $\delta$ vérifie l'inéquation $\delta^2-3\delta-4+6\chi\leq0$,
on en déduit que $\delta\leq \lfloor(3+\sqrt{25-24\chi})/2\rfloor=c_p-1$. Si
$\delta\leq4$, cette inégalité est trivialement vérifiée dès que
$\chi\leq0$.

On supprime ici les boucles qui sont apparues
en ajoutant des arêtes dans l'étape précédente ; la majoration de $\delta$
reste valide.

 On construit ensuite un coloriage par une double récurrence sur $p$, et sur 
$s$ à $p$ fixé. Dans la récurrence sur $s$, on aura besoin que la majoration
de $\delta$ s'applique bien à tous les graphes considérés.

Si $s\leq c_p$, le graphe $\Gamma$ se colorie évidemment avec $c_p$ couleurs.
Dans le cas contraire, on note $x$ un sommet 
de degré minimal et on considère le graphe $\Gamma'$ obtenu en supprimant 
le sommet $x$ et les arêtes qui lui sont adjacentes. La récurrence se
décompose en deux cas, selon que la composante de bord où est situé $x$
porte d'autres sommets de $\Gamma$ où non. Soulignons que si $p=1$, 
tous les sommets sont sur la même composante de bord donc seul le premier
cas se présentera.

 Dans le premier cas, on ajoute une arête qui joint les deux voisins de $x$
sur la même composante de bord (s'il ne reste qu'un seul point sur cette
composante, l'arête supplémentaire forme une boucle) et on appelle
encore $\Gamma'$ le graphe obtenu. La démonstration de la majoration 
$\delta\leq c_p-1$ obtenue pour $\Gamma$ s'applique alors aussi à $\Gamma'$. 
Comme $\Gamma'$ a $s-1$ 
sommets, on peut lui appliquer l'hypothèse de récurrence et le colorier 
avec $c_p$ couleurs. On applique le même coloriage
à tous les sommets de $\Gamma$ sauf $x$, et comme le degré $\delta$ de $x$ est
majoré par $c_p-1$ on peut colorier $x$ avec une couleur différente de ses
voisins.

 Dans le second cas, la composante de bord de $x$ ne porte aucun sommet
de $\Gamma'$ et on se ramène à la surface $\Sigma_{p-1}$ en collant un disque
le long de cette composante de bord. Le graphe $\Gamma'$ est 
proprement plongé dans $\Sigma_{p-1}$, donc admet un coloriage à $c_{p-1}$ 
couleurs par hypothèse de récurrence. Comme la constante $c_p$ croît avec
$p$ il admet un coloriage
à $c_p$ couleurs. On conclut comme dans le cas précédent.
\end{proof}

\begin{proof}[Démonstration du théorème~\ref{rel:th1}]
On va noter temporairement $\kappa(\Sigma_p)$ le nombre de sommets
du plus grand graphe complet proprement plongeable dans $\Sigma_p$. Il 
est clair qu'un plongement dans $\Sigma_p$
induit un plongement dans $\Sigma_{p+1}$ et dans $\Sigma$, donc
$\kappa(\Sigma_p)$ et $\mathrm{Chr}_0(\Sigma_p)$ sont des fonctions
croissantes de $p$ et 
\begin{equation}\label{rel:ineg1}
\kappa(\Sigma_p)\leq\mathrm{Chr}_0(\Sigma_p)\leq\mathrm{Chr}(\Sigma).
\end{equation}

Supposons que $p=1$. Si on se donne un plongement propre du graphe complet 
$K_n$ à $n$ sommets dans $\Sigma_1$, on peut
construire un plongement de $K_{n+1}$ dans $\Sigma$ en collant
un disque sur le bord de $\Sigma_1$, en ajoutant un sommet sur ce disque
et en le reliant par des arêtes aux sommets de $K_n$ (c'est possible
puisqu'ils sont tous sur le bord du disque). Réciproquement, étant donné
un graphe complet $K_{n+1}$ étant plongé dans $\Sigma$, on peut enlever
un disque contenant un seul sommet et déplacer les autres sommets de
manière à ce qu'ils se situent sur le bord du disque. On obtient
un plongement de $K_n$ dans $\Sigma_1$ dont les sommets sont sur 
$\partial\Sigma_1$. On en déduit que $\kappa(\Sigma_1)=\mathrm{Chr}(\Sigma)-1$
et donc que
\begin{equation}\label{rel:ineg2}
\mathrm{Chr}(\Sigma)-1\leq\kappa(\Sigma_p)\leq\mathrm{Chr}_0(\Sigma_p)\leq
\mathrm{Chr}(\Sigma),
\end{equation}
ce qui, avec le lemme~\ref{rel:lem1}, conclut la démonstration du point~(a) 
du théorème.

On peut maintenant montrer le point (b), c'est-à-dire que 
$\kappa(\Sigma_p)=\mathrm{Chr}_0(\Sigma_p)$.
Ces deux nombres ne peuvent prendre que les valeurs $\mathrm{Chr}(\Sigma)-1$ 
et $\mathrm{Chr}(\Sigma)$. Si $\mathrm{Chr}_0(\Sigma_p)=
\mathrm{Chr}(\Sigma)-1$, alors on a immédiatement l'égalité dans les
deux premières inégalités de~(\ref{rel:ineg2}). Supposons donc que 
$\mathrm{Chr}_0(\Sigma_p)=
\mathrm{Chr}(\Sigma)$ et considérons un graphe~$\Gamma$ proprement plongé 
dans $\Sigma_p$, dont le nombre
chromatique est $\mathrm{Chr}(\Sigma)$ et qui est critique, c'est-à-dire
qu'on ne peut pas lui enlever d'arêtes sans diminuer son nombre
chromatique. Cette propriété de criticité est intrinsèque au 
graphe~$\Gamma$, donc il est aussi critique comme graphe plongé dans~$\Sigma$.
Or, on sait qu'un tel graphe est nécessairement un graphe complet
(cf. \cite{bo98}, ch.~V, p.~156-157), donc on a égalité dans 
l'inégalité~(\ref{rel:ineg1}).

On a déjà montré que $\mathrm{Chr}_0(\Sigma_1)=\kappa(\Sigma_1)=
\mathrm{Chr}(\Sigma)-1$. Il reste à montrer que 
$\mathrm{Chr}_0(\Sigma_p)=\mathrm{Chr}(\Sigma)$ si
$p\geq(\mathrm{Chr}(\Sigma)-1)/2$ pour conclure le point (c) du théorème. 
On considère le graphe complet à 
$\mathrm{Chr}(\Sigma)$ sommets qu'on plonge dans $\Sigma$ et on 
construit un plongement de ce graphe dans $\Sigma_p$ en enlevant $p$
disques (pour $p$ suffisamment grand) à $\Sigma$ de manière à ce que les
disques ne rencontrent pas les arêtes et que les sommets soient situés
sur le bord des disques. Pour déterminer une valeur de $p$ adéquate,
on remarque qu'en plaçant le premier disque de manière quelconque, on peut
placer (au moins) trois sommets du graphe sur son bord. On peut ensuite
regrouper les autres sommets par deux et placer un disque adjacent 
à ces deux sommets et qui longe l'arête qui les relie. En fonction de 
la parité du nombre de sommets, il peut en rester un qui nécessite un disque
supplémentaire. Au total, on a utilisé $\lceil(\mathrm{Chr}(\Sigma)-1)/2
\rceil$ disques.
\end{proof}

\begin{proof}[Démonstration de la proposition~\ref{rel:th2}]
Le fait que $\mathrm{Chr}_0(\mathbb P^2_2)=5$ découle des
théorèmes~\ref{intro:thpresc} et~\ref{intro:thp}.

Le calcul de $\mathrm{Chr}_0(\mathbb K^2_2)$ repose sur une amélioration
de l'estimation faite par le théorème~\ref{rel:th1} dans le cas
où $p\geq\mathrm{(Chr(\Sigma)-1)/2}$ en utilisant le plongement explicite
d'un graphe complet dans la surface. Dans le cas de la bouteille de Klein,
le plongement du graphe complet à~6 sommets est représenté sur la 
figure~\ref{rel:fig0} (les lettres indiquent comment sont identifiés les
cotés droite et gauche).

\begin{figure}[h]
\begin{center}
\begin{picture}(0,0)%
\includegraphics{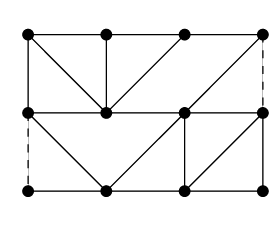}%
\end{picture}%
\setlength{\unitlength}{4144sp}%
\begingroup\makeatletter\ifx\SetFigFont\undefined%
\gdef\SetFigFont#1#2#3#4#5{%
  \reset@font\fontsize{#1}{#2pt}%
  \fontfamily{#3}\fontseries{#4}\fontshape{#5}%
  \selectfont}%
\fi\endgroup%
\begin{picture}(2254,1722)(436,-1030)
\put(1206,545){\makebox(0,0)[lb]{\smash{{\SetFigFont{11}{13.2}{\rmdefault}{\mddefault}{\updefault}{\color[rgb]{0,0,0}$2$}%
}}}}
\put(1206,-370){\makebox(0,0)[lb]{\smash{{\SetFigFont{11}{13.2}{\rmdefault}{\mddefault}{\updefault}{\color[rgb]{0,0,0}$5$}%
}}}}
\put(2399,545){\makebox(0,0)[lb]{\smash{{\SetFigFont{11}{13.2}{\rmdefault}{\mddefault}{\updefault}{\color[rgb]{0,0,0}$1$}%
}}}}
\put(1803,-52){\makebox(0,0)[lb]{\smash{{\SetFigFont{11}{13.2}{\rmdefault}{\mddefault}{\updefault}{\color[rgb]{0,0,0}$6$}%
}}}}
\put(610,545){\makebox(0,0)[lb]{\smash{{\SetFigFont{11}{13.2}{\rmdefault}{\mddefault}{\updefault}{\color[rgb]{0,0,0}$1$}%
}}}}
\put(1803,545){\makebox(0,0)[lb]{\smash{{\SetFigFont{11}{13.2}{\rmdefault}{\mddefault}{\updefault}{\color[rgb]{0,0,0}$3$}%
}}}}
\put(2519,-211){\makebox(0,0)[lb]{\smash{{\SetFigFont{11}{13.2}{\rmdefault}{\mddefault}{\updefault}{\color[rgb]{0,0,0}$4$}%
}}}}
\put(610,-966){\makebox(0,0)[lb]{\smash{{\SetFigFont{11}{13.2}{\rmdefault}{\mddefault}{\updefault}{\color[rgb]{0,0,0}$1$}%
}}}}
\put(1206,-966){\makebox(0,0)[lb]{\smash{{\SetFigFont{11}{13.2}{\rmdefault}{\mddefault}{\updefault}{\color[rgb]{0,0,0}$2$}%
}}}}
\put(1803,-966){\makebox(0,0)[lb]{\smash{{\SetFigFont{11}{13.2}{\rmdefault}{\mddefault}{\updefault}{\color[rgb]{0,0,0}$3$}%
}}}}
\put(2399,-966){\makebox(0,0)[lb]{\smash{{\SetFigFont{11}{13.2}{\rmdefault}{\mddefault}{\updefault}{\color[rgb]{0,0,0}$1$}%
}}}}
\put(451,-211){\makebox(0,0)[lb]{\smash{{\SetFigFont{11}{13.2}{\rmdefault}{\mddefault}{\updefault}{\color[rgb]{0,0,0}$4$}%
}}}}
\put(451,119){\makebox(0,0)[lb]{\smash{{\SetFigFont{9}{10.8}{\rmdefault}{\mddefault}{\updefault}{\color[rgb]{0,0,0}(a)}%
}}}}
\put(451,-511){\makebox(0,0)[lb]{\smash{{\SetFigFont{9}{10.8}{\rmdefault}{\mddefault}{\updefault}{\color[rgb]{0,0,0}(b)}%
}}}}
\put(2476,119){\makebox(0,0)[lb]{\smash{{\SetFigFont{9}{10.8}{\rmdefault}{\mddefault}{\updefault}{\color[rgb]{0,0,0}(b)}%
}}}}
\put(2476,-511){\makebox(0,0)[lb]{\smash{{\SetFigFont{9}{10.8}{\rmdefault}{\mddefault}{\updefault}{\color[rgb]{0,0,0}(a)}%
}}}}
\end{picture}%
\end{center}
\caption{Plongement de $K_6$ dans $\mathbb K^2$%
\label{rel:fig0}}
\end{figure}

En enlevant les faces $(125)$ et $(346)$, on obtient un plongement
propre de $K_6$ dans $\mathbb K^2_2$.

Il reste à calculer $\mathrm{Chr}_0(\#3\mathbb P^2_2)$.
On sait déjà que $\mathrm{Chr}_0(\#3\mathbb P^2_2)\leq7$ 
(théorème~\ref{rel:th1}), il suffit donc d'exhiber un plongement propre de 
$K_7$.
On procède de la manière suivante : topologiquement, on enlève deux disques
au ruban de Möbius $\mathbb M^2$ et on colle un pantalon (une sphère privée 
de trois disques) sur les deux composantes de bord ainsi crées. Pour construire 
le graphe, on part du graphe $K_5$ plongé dans le ruban de Möbius et on
ajoute deux sommets situés sur le troisième bord du pantalon. Les détails
de la construction sont représentés sur la figure~\ref{rel:fig1}.
\begin{figure}[h]
\begin{center}
\begin{picture}(0,0)%
\includegraphics{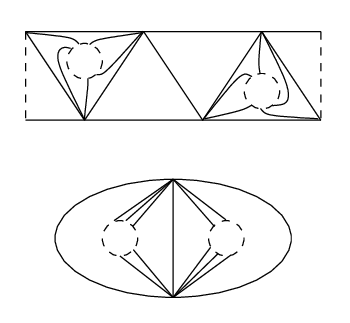}%
\end{picture}%
\setlength{\unitlength}{4144sp}%
\begingroup\makeatletter\ifx\SetFigFont\undefined%
\gdef\SetFigFont#1#2#3#4#5{%
  \reset@font\fontsize{#1}{#2pt}%
  \fontfamily{#3}\fontseries{#4}\fontshape{#5}%
  \selectfont}%
\fi\endgroup%
\begin{picture}(2658,2460)(256,-1831)
\put(406,479){\makebox(0,0)[lb]{\smash{{\SetFigFont{12}{14.4}{\rmdefault}{\mddefault}{\updefault}{\color[rgb]{0,0,0}1}%
}}}}
\put(1306,479){\makebox(0,0)[lb]{\smash{{\SetFigFont{12}{14.4}{\rmdefault}{\mddefault}{\updefault}{\color[rgb]{0,0,0}3}%
}}}}
\put(1756,-466){\makebox(0,0)[lb]{\smash{{\SetFigFont{12}{14.4}{\rmdefault}{\mddefault}{\updefault}{\color[rgb]{0,0,0}4}%
}}}}
\put(2206,479){\makebox(0,0)[lb]{\smash{{\SetFigFont{12}{14.4}{\rmdefault}{\mddefault}{\updefault}{\color[rgb]{0,0,0}5}%
}}}}
\put(2656,-466){\makebox(0,0)[lb]{\smash{{\SetFigFont{12}{14.4}{\rmdefault}{\mddefault}{\updefault}{\color[rgb]{0,0,0}1}%
}}}}
\put(1531,-691){\makebox(0,0)[lb]{\smash{{\SetFigFont{12}{14.4}{\rmdefault}{\mddefault}{\updefault}{\color[rgb]{0,0,0}6}%
}}}}
\put(856,-466){\makebox(0,0)[lb]{\smash{{\SetFigFont{12}{14.4}{\rmdefault}{\mddefault}{\updefault}{\color[rgb]{0,0,0}2}%
}}}}
\put(1531,-1816){\makebox(0,0)[lb]{\smash{{\SetFigFont{12}{14.4}{\rmdefault}{\mddefault}{\updefault}{\color[rgb]{0,0,0}7}%
}}}}
\put(814,128){\makebox(0,0)[lb]{\smash{{\SetFigFont{8}{9.6}{\rmdefault}{\mddefault}{\updefault}{\color[rgb]{0,0,0}(b)}%
}}}}
\put(1087,-1222){\makebox(0,0)[lb]{\smash{{\SetFigFont{8}{9.6}{\rmdefault}{\mddefault}{\updefault}{\color[rgb]{0,0,0}(b)}%
}}}}
\put(271, 29){\makebox(0,0)[lb]{\smash{{\SetFigFont{8}{9.6}{\rmdefault}{\mddefault}{\updefault}{\color[rgb]{0,0,0}(a)}%
}}}}
\put(2179,-106){\makebox(0,0)[lb]{\smash{{\SetFigFont{8}{9.6}{\rmdefault}{\mddefault}{\updefault}{\color[rgb]{0,0,0}(c)}%
}}}}
\put(1900,-1222){\makebox(0,0)[lb]{\smash{{\SetFigFont{8}{9.6}{\rmdefault}{\mddefault}{\updefault}{\color[rgb]{0,0,0}(c)}%
}}}}
\put(2746, 29){\makebox(0,0)[lb]{\smash{{\SetFigFont{8}{9.6}{\rmdefault}{\mddefault}{\updefault}{\color[rgb]{0,0,0}(a)}%
}}}}
\end{picture}%
\end{center}
\caption{Plongement propre de $K_7$ dans $\#3\mathbb P^2_2$%
\label{rel:fig1}}
\end{figure}

En haut de la figure est représenté le graphe $K_5$ plongé dans 
$\mathbb M^2$. On enlève deux disques situés dans les faces $(123)$ et
$(451)$. En bas est représenté le pantalon, les deux bords intérieurs
étant recollés sur $\mathbb M^2$. Les sommets~$6$ et~$7$ sont bien
reliés entre eux et aux cinq premiers sommets.
\end{proof}

\begin{proof}[Démonstration de la proposition~\ref{rel:th3}]
 Pour calculer les nombres chromatiques traités ici, on procède comme
on l'a fait pour $\mathrm{Chr}_0(\mathbb K^2_2)$ : on considère le découpage
de $\Sigma$ induit par le plongement d'un graphe complet maximal et on
cherche comment enlever des disques de manière à placer tous les sommets
sur le bord.

On commence par le cas $\chi=-2$. Des plongements de $K_8$ dans
$\mathbb T^2\#\mathbb T^2$ et $\#4\mathbb P^2_2$ ont été décrits
respectivement par L.~Heffter \cite{he91} et I.~N.~Kagno \cite{ka35}.
La figure~\ref{rel:fig2} représente un plongement de $K_8$ dans 
$\mathbb T^2\#\mathbb T^2$: la surface $\mathbb T^2\#\mathbb T^2$ 
est obtenue en recollant les deux pantalons représentés sur la figure
le long de leurs bords, la position des sommets de $K_8$ indiquant 
l'orientation du recollement. On peut remarquer que l'arête $(34)$
peut être placée arbitrairement d'un coté ou de l'autre du bord des
pantalons portant les deux sommets. En fonction de ce choix, on obtient
une surface orientable ou non.

\begin{figure}[h]
\begin{picture}(0,0)%
\includegraphics{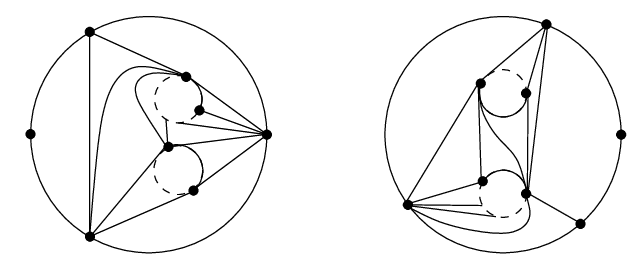}%
\end{picture}%
\setlength{\unitlength}{4144sp}%
\begingroup\makeatletter\ifx\SetFigFont\undefined%
\gdef\SetFigFont#1#2#3#4#5{%
  \reset@font\fontsize{#1}{#2pt}%
  \fontfamily{#3}\fontseries{#4}\fontshape{#5}%
  \selectfont}%
\fi\endgroup%
\begin{picture}(4823,2007)(443,-1504)
\put(987,341){\makebox(0,0)[lb]{\smash{{\SetFigFont{12}{14.4}{\rmdefault}{\mddefault}{\updefault}{\color[rgb]{0,0,0}$8$}%
}}}}
\put(901,-1440){\makebox(0,0)[lb]{\smash{{\SetFigFont{12}{14.4}{\rmdefault}{\mddefault}{\updefault}{\color[rgb]{0,0,0}$6$}%
}}}}
\put(2547,-575){\makebox(0,0)[lb]{\smash{{\SetFigFont{12}{14.4}{\rmdefault}{\mddefault}{\updefault}{\color[rgb]{0,0,0}$5$}%
}}}}
\put(458,-570){\makebox(0,0)[lb]{\smash{{\SetFigFont{12}{14.4}{\rmdefault}{\mddefault}{\updefault}{\color[rgb]{0,0,0}$7$}%
}}}}
\put(1834, -5){\makebox(0,0)[lb]{\smash{{\SetFigFont{8}{9.6}{\rmdefault}{\mddefault}{\updefault}{\color[rgb]{0,0,0}$1$}%
}}}}
\put(1813,-357){\makebox(0,0)[lb]{\smash{{\SetFigFont{8}{9.6}{\rmdefault}{\mddefault}{\updefault}{\color[rgb]{0,0,0}$2$}%
}}}}
\put(1984,-989){\makebox(0,0)[lb]{\smash{{\SetFigFont{8}{9.6}{\rmdefault}{\mddefault}{\updefault}{\color[rgb]{0,0,0}$4$}%
}}}}
\put(1531,-627){\makebox(0,0)[lb]{\smash{{\SetFigFont{8}{9.6}{\rmdefault}{\mddefault}{\updefault}{\color[rgb]{0,0,0}$3$}%
}}}}
\put(5251,-576){\makebox(0,0)[lb]{\smash{{\SetFigFont{12}{14.4}{\rmdefault}{\mddefault}{\updefault}{\color[rgb]{0,0,0}$5$}%
}}}}
\put(4617,356){\makebox(0,0)[lb]{\smash{{\SetFigFont{12}{14.4}{\rmdefault}{\mddefault}{\updefault}{\color[rgb]{0,0,0}$8$}%
}}}}
\put(4861,-1400){\makebox(0,0)[lb]{\smash{{\SetFigFont{12}{14.4}{\rmdefault}{\mddefault}{\updefault}{\color[rgb]{0,0,0}$6$}%
}}}}
\put(3343,-1260){\makebox(0,0)[lb]{\smash{{\SetFigFont{12}{14.4}{\rmdefault}{\mddefault}{\updefault}{\color[rgb]{0,0,0}$7$}%
}}}}
\put(4503,-962){\makebox(0,0)[lb]{\smash{{\SetFigFont{8}{9.6}{\rmdefault}{\mddefault}{\updefault}{\color[rgb]{0,0,0}$2$}%
}}}}
\put(3949,-149){\makebox(0,0)[lb]{\smash{{\SetFigFont{8}{9.6}{\rmdefault}{\mddefault}{\updefault}{\color[rgb]{0,0,0}$4$}%
}}}}
\put(4311,-242){\makebox(0,0)[lb]{\smash{{\SetFigFont{8}{9.6}{\rmdefault}{\mddefault}{\updefault}{\color[rgb]{0,0,0}$3$}%
}}}}
\put(3999,-855){\makebox(0,0)[lb]{\smash{{\SetFigFont{8}{9.6}{\rmdefault}{\mddefault}{\updefault}{\color[rgb]{0,0,0}$1$}%
}}}}
\end{picture}%
\begin{center}
\end{center}
\caption{Plongement de $K_8$ dans $\mathbb T^2\#\mathbb T^2$%
\label{rel:fig2}}
\end{figure}

On peut noter que la décomposition de la surface induite par le plongement
comporte un quadrilatère $(5628)$. Si on enlève cette face, on peut ensuite
enlever deux autres disques, par exemple le long des arêtes $(13)$ et $(47)$,
de manière à obtenir un plongement propre de $K_8$ dans 
$(\mathbb T^2\#\mathbb T^2)_3$ ou $(\#4\mathbb P^2)_3$ selon le choix
d'orientation. Par conséquent, $\mathrm{Chr}_0((\mathbb T^2\#\mathbb T^2)_3)
=\mathrm{Chr}_0(\#4\mathbb P^2_3)=8$.

La figure~\ref{rel:fig2} ne permet pas de montrer que 
$\mathrm{Chr}_0((\mathbb T^2\#\mathbb T^2)_2)=8$ car les huit sommets ne 
sont pas sur le bord de deux quadrilatères. Cependant, le plongement de
$K_8$ dans $(\mathbb T^2\#\mathbb T^2)_2$ décrit dans \cite{ri74} (p.~23, 
table~(2.9)) vérifie cette propriété, les deux quadrilatères étant
$(0246)$ et $(1357)$.

Le plongement de $K_9$ dans $\#5\mathbb P^2$ a été construit par 
H.S.M.~Coxeter dans \cite{co43}. Ce plongement triangule la surface et 
on peut vérifier qu'on peut trouver trois faces dont les sommets 
sont tous distincts (par exemple les faces $(129)$, $(678)$ et $(345)$
de la figure~11 de \cite{co43}). En enlevant ces faces, on obtient
un plongement propre de $K_9$ dans $\#5\mathbb P^2_3$.

 Le plongement de $K_9$ dans $\#3\mathbb T^2$ exhibé par L.~Heffter 
dans \cite{he91} comporte une face hexagonale ayant 5~sommets distincts.
En enlevant cette face et deux autres disques, on peut donc obtenir
un plongement propre de $K_9$ dans $\#3\mathbb T^2_3$. Le cas de 
$\#6\mathbb P^2_3$ s'obtient par somme connexe de $\#5\mathbb P^2_3$
avec $\mathbb P^2$. 

Dans \cite{bo39}, R.~C. Bose a donné la construction d'un plongement
de $K_{10}$ dans $\#7\mathbb P^2_3$, elle est aussi étudiée (et illustrée
par une figure) par H.~Coxeter dans \cite{co43}. Le graphe triangule la 
surface et on peut trouver deux triangles n'ayant pas de sommets communs
(voir la figure~14 de \cite{co43}). Les quatre autres sommets du graphe
peuvent se placer sur le bord de deux disques ; en enlevant quatre disques,
on peut donc bien placer tous les sommets sur le bord.

 Le plongement de $K_{10}$ dans $\#4\mathbb T^2$ exhibé 
dans \cite{he91} comporte des quadrilatères, chacun ayant ses quatre sommets
distincts. Après avoir enlevé un de ces quadrilatères, il reste 6~sommets
à placer sur un bord, ce qu'on peut faire en enlevant deux autres faces 
triangulaires (les trois faces sont $(2,1,10,4)$, $(3,6,8)$, $(1,4,5)$ dans
la construction de Heffter).
\end{proof}

\section{Construction de valeurs propres multiples}\label{presc}
Cette section est consacrée à la démonstration du 
théorème~\ref{intro:thpresc}. Elle est très similaire à la démonstration
du théorème~1.2 de~\cite{ja14}. On va donc rappeler les différentes étapes
en indiquant les modifications à apporter. Le principe consiste à
faire tendre le début du spectre de la surface vers celui d'un
laplacien combinatoire d'un graphe complet, puis d'utiliser la
propriété de stabilité du spectre de ce graphe (cf.~\cite{cdv88} 
les rappels ci dessous) pour exhiber une valeur propre multiple sur
la surface.

Par cette méthode on va en fait montrer un résultat analogue pour un graphe
fini quelconque, cela nous sera utile en particulier pour démontrer
le corollaire~\ref{intro:plongement}. L'énoncé général fait intervenir
l'invariant de graphe forgé par Y.~Colin de Verdière dans 
\cite{cdv90} (voir aussi \cite{cdv98}). Rappelons que cet invariant,
qu'on notera $\mu(\Gamma)$, peut se définir comme étant la multiplicité
maximale de la deuxième valeur propre des opérateurs de Schrödinger
combinatoires sur le graphe $\Gamma$ pour lesquels cette valeur
propre multiple vérifie l'hypothèse de transversalité d'Arnol'd formalisée
dans~\cite{cdv88} (on dit alors que la multiplicité est stable). 
Si on note $M$ la matrice de cet opérateur de Schrödinger, cette 
hypothèse de tranvsersalité consiste en ce que dans l'espace des matrices
symétriques , le sous-espace des matrices d'opérateurs de Schrödinger 
et le sous-espace des matrices possédant une deuxième valeur propre de 
même multiplicité se coupent transversalement. En pratique, on
manipulera des laplaciens à densité sur le graphe plutôt que des opérateurs 
de Schödinger, les deux points de vue étant équivalents (cf. 
\textsection~\ref{rappels:graphes}). On peut consulter 
\cite{cdv88}, \cite{cdv98}
ou \cite{ja09b} pour une description plus détaillée de cette
propriété de transversalité et de son utilisation.

\begin{theo}\label{presc:th}
Soit $\Gamma$ un graphe admettant un plongement 
propre dans une surface compacte à bord $\Sigma$. Il existe des densités
$\gamma\in C^\infty(\Sigma)$, $\rho\in C^\infty(\partial\Sigma)$ et une 
métrique $g$ sur $\Sigma$  
telles que la multiplicité de $\sigma_1(\Sigma,g,\rho,\gamma)$ égale 
$\mu(\Gamma)$.
\end{theo}
Le théorème~\ref{intro:thpresc} s'en déduit en remarquant d'une part
qu'un graphe complet à $n$ sommets vérifie $\mu(K_n)=n-1$
(\cite{cdv88}, section~4, th.~1), et d'autre part que si 
$n=\textrm{Chr}_0(\Sigma)$, alors $K_n$ admet un plongement propre dans
$\Sigma$ (théorème~\ref{rel:th1} de la section précédente).

On peut aussi en déduire le critère de plongement qui suit :
\begin{cor}\label{presc:crit}
Si $\Gamma$ admet un plongement propre dans $\Sigma$, alors 
$\mu(\Gamma)\leq m_1(\Sigma)$.
\end{cor}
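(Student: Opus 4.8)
The plan is to derive this corollary directly from Théorème~\ref{presc:th}, which I may assume. The statement asserts that a proper embedding of a graph $\Gamma$ into $\Sigma$ forces the Colin de Verdière invariant $\mu(\Gamma)$ to be a lower bound for $m_1(\Sigma)$, the maximal multiplicity of the first nonzero Steklov eigenvalue over all choices of $g$, $\rho$, $\gamma$.

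First I would recall the definition of $m_1(\Sigma)$: it is the \emph{supremum} of the multiplicity of $\sigma_1(\Sigma,g,\rho,\gamma)$ as the metric and the two densities vary over all admissible choices. Thus to prove $\mu(\Gamma)\leq m_1(\Sigma)$, it suffices to exhibit a single triple $(g,\rho,\gamma)$ for which the multiplicity of $\sigma_1$ is at least $\mu(\Gamma)$. But this is exactly what Théorème~\ref{presc:th} provides: since $\Gamma$ admits a proper embedding in $\Sigma$, there exist densities $\gamma\in C^\infty(\Sigma)$, $\rho\in C^\infty(\partial\Sigma)$ and a metric $g$ such that the multiplicity of $\sigma_1(\Sigma,g,\rho,\gamma)$ equals $\mu(\Gamma)$.

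The argument is therefore essentially immediate: I would write that by Théorème~\ref{presc:th}, the hypothesis that $\Gamma$ is properly embedded in $\Sigma$ yields a choice of metric and densities realizing a first eigenvalue of multiplicity exactly $\mu(\Gamma)$; since $m_1(\Sigma)$ is by definition the maximal multiplicity achievable over all such choices, we obtain $\mu(\Gamma)=\operatorname{mult}\sigma_1(\Sigma,g,\rho,\gamma)\leq m_1(\Sigma)$, which is the desired inequality. One small point worth making explicit is that Théorème~\ref{presc:th} delivers \emph{equality} with $\mu(\Gamma)$ for the specific triple it constructs, so there is no loss in passing to the maximum; the inequality is a consequence of the variational definition of $m_1$ as a supremum and nothing more is needed.

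There is no real obstacle here — the entire content of the corollary lives in Théorème~\ref{presc:th}, and the corollary is merely the translation of that existence statement into the language of the maximal-multiplicity invariant $m_1$. The only thing to be careful about is to invoke the correct direction: the theorem gives a realization (hence a lower bound for the sup), not an upper bound, so the inequality goes $\mu(\Gamma)\leq m_1(\Sigma)$ and not the reverse. I would keep the proof to one or two sentences.
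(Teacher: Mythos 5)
Votre démonstration est correcte et suit exactement la même voie que l'article : le corollaire est une conséquence immédiate du théorème~\ref{presc:th}, qui fournit un triple $(g,\rho,\gamma)$ réalisant la multiplicité $\mu(\Gamma)$, et la définition de $m_1(\Sigma)$ comme multiplicité maximale donne aussitôt l'inégalité. Rien à ajouter.
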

Le corollaire~\ref{intro:plongement} est une conséquence de ce dernier ;
il sera traité au paragraphe~\ref{presc:plongement}.

 La première étape de la démonstration du théorème~\ref{presc:th} consiste à 
construire une variété dont le début du spectre de Steklov tend vers celui 
d'un laplacien combinatoire pour une métrique donnée
sur $\Gamma$, cette variété pouvant s'interpréter comme un voisinage
tubulaire de $\Gamma$. En dimension~2, cette variété, qu'on 
notera~$\Omega$, est formée de $n$ demi-disques ($n$ étant le nombre
de sommets de $\Gamma$), deux demi-disques étant reliés par un rectangle
fin si les sommets correspondants sont reliés par une arête. Sur ce
domaine, on considère le spectre de Steklov-Neumann (voir les rappels
de la section~\ref{rappels}): le bord de Steklov est formé des diamètres
des demi-disques, et on pose la condition de Neumann sur le reste du bord.

\begin{lemme}[\cite{ja14}, théorème~4.3]\label{presc:lem1}
Il existe une famille de métriques $g_\varepsilon$ sur $\Omega$ telle que
les $n$ premières valeurs propre de Steklov de $(\Omega,g_\varepsilon)$ 
tendent vers le spectre de $\Delta_\Gamma$, avec convergence des espaces 
propres.
\end{lemme}
\begin{rema}
Dans \cite{ja14}, ce théorème est démontré pour des densités uniformes
sur $\partial\Omega_S$ et sur les sommets du graphes. Si on munit 
les sommets du graphe d'une autre mesure, on peut adapter la démonstration
en munissant chaque composante de $\partial\Omega_S$ (les diamètres des
demi-disques) de la densité correspondante. On peut donc faire tendre
le spectre de $\Omega$ non seulement vers le spectre d'un laplacien 
sur $\Gamma$, mais aussi d'un opérateur de Schrödinger.
\end{rema}

Le deuxième ingrédient de la démonstration est un résultat de 
convergence du spectre d'une surface vers celui d'un de ses domaines.
Sur un domaine $U$ d'une surface à bord $\Sigma$, on considérera 
le problème de Steklov-Neumann avec $\partial U_S=\partial U
\cap\partial\Sigma$ comme bord de Steklov et $\partial U_N=\partial U
\backslash \partial U_S$ comme bord de Neumann:
\begin{lemme}\label{presc:lemme}
Soit $(\Sigma,g)$ une surface riemannienne compacte à bord, $\rho$ une
densité sur $\partial \Sigma$ et $U$ un domaine 
de $\Sigma$ à bord $C^1$ par morceaux tel que $\partial U_S=\partial U
\cap\partial\Sigma$ soit non vide. Il existe des familles de densités
$\gamma_\varepsilon\in C^\infty(\Sigma)$ et de métriques $g_\varepsilon$ telles 
que $\sigma_k(\Sigma,g_\varepsilon,\gamma_\varepsilon,\rho)\to
\sigma_k(U,\partial U_S,g,\rho_{|\partial U_S})$ avec convergence 
des espaces propres quand $\varepsilon$ tend vers~0.
\end{lemme}
La démonstration est la même que celle du théorème~3.8 de \cite{ja14},
avec l'adaptation à la dimension~2 introduite par Y.~Colin de Verdière
dans \cite{cdv87}. On introduit une densité singulière
$\bar\gamma_\eta$ qui vaut~1 sur $U$ et $\eta$ sur~$\Sigma\backslash U$,
une métrique $\bar g_\eta$ égale à $g$ sur $U$ et à $\eta^2 g$ sur
$\Sigma\backslash U$
et on travaille avec les familles de formes quadratiques et de normes de 
Hilbert induites par $\bar\gamma_\eta$ et $\bar g_\eta$:
\begin{equation}
Q_\eta(f)=\inf_{\tilde f_{|\partial M}=f}
\left(\int_U|\de\tilde f|^2\de v_g+\eta^3
\int_{M\backslash U} |\de\tilde f|^2\de v_g\right)
\end{equation}
et $|f|_\eta=\int_{\partial U_S} f^2\de v_g+
\eta^2\int_{\partial M\backslash \partial U_S} f^2\de v_g$. Le reste
de la démonstration (convergence de spectre et lissage de la densité et
de la métrique)
est identique à~\cite{ja14}.

\begin{proof}[Démonstration du théorème~\ref{presc:th}]
Soit $\Gamma$ admettant un plongement propre dans $\Sigma$. On peut trouver 
un voisinage de $\Gamma$ dans $\Sigma$
difféomorphe au domaine $\Omega$ décrit précédemment. Étant donnée
une métrique et une mesure sur $\Gamma$, on peut trouver une famille de 
métriques $g_\varepsilon$ sur $\Omega$ dont le début du spectre de 
Steklov-Neumann tend vers le 
spectre du laplacien combinatoire sur $\Gamma$
(lemme~\ref{presc:lem1}). Pour toute métrique sur $\Omega$ on peut 
trouver une famille de densités sur $\Sigma$ et $\partial\Sigma$ telle 
que le spectre de Steklov de $\Sigma$ tende vers celui de $\Omega$ 
(lemme~\ref{presc:lemme}). Par conséquent,
il existe des familles de métriques $g_\varepsilon$ de densités 
$\rho_\varepsilon$ (étendant la métrique $g_\varepsilon$ et la densité
$\rho$ de $\Omega$) et de densités 
$\gamma_\varepsilon$ sur $\Sigma$ telles que le 
début du spectre de ($\Sigma, g_\varepsilon,\rho_\varepsilon,
\gamma_\varepsilon)$ tende vers le spectre 
du laplacien combinatoire sur $\Gamma$.

En munissant $\Gamma$ de la métrique et de la mesure
réalisant la multiplicité $\mu(\Gamma)$, on peut faire tendre le spectre de 
$\Sigma$ vers un spectre limite (la métrique et la densité étant alors
dégénérées) ayant la multiplicité souhaitée. Pour que 
$\sigma_1(\Sigma,g_\varepsilon,\rho_\varepsilon,\gamma_\varepsilon)$ soit 
de multiplicité $\mu(\Gamma)$ pour une 
métrique $g_\varepsilon$ et des densités $\rho_\varepsilon$ et 
$\gamma_\varepsilon$ lisses, on 
utilise la propriété de stabilité de la multiplicité: quitte à déformer 
l'opérateur sur $\Gamma$,
on peut trouver un $\varepsilon>0$ tel que $\sigma_1(\Sigma,g_\varepsilon,
\gamma_\varepsilon)$ soit de multiplicité~$\mu(\Gamma)$.
\end{proof}

\subsection{Critère de plongement non entrelacé}\label{presc:plongement}
 Cette section est consacrée au corollaire~\ref{intro:plongement}. Sa 
démonstration repose sur les propriétés
de l'invariant $\mu$ des graphes défini par Y.~Colin de Verdière et déjà 
utilisé dans le paragraphe précédent. On utilisera en particulier le fait que 
cet invariant permet de caractériser les graphes non entrelacés:
\begin{theo}[\cite{rst95},\cite{bcdv95},\cite{ls98}]\label{plongement:th}
Un graphe $\Gamma$ est non entrelacé si et seulement si $\mu(\Gamma)\leq4$.
\end{theo}
Le fait $\mu(\Gamma)\leq4$ implique le non entrelacement du graphe
découle de la caractérisation par mineurs exclus des graphes non 
entrelacés due à N.~Robertson, P.~Seymour et R.~Thomas \cite{rst95}
 et du calcul de $\mu$ sur les graphes de la famille
de Petersen fait dans \cite{bcdv95}. La réciproque a été montrée
par L.~Lov\'asz et A.~ Schrijver dans \cite{ls98}.

Le corollaire~\ref{intro:plongement} découle de la remarque suivante :
sachant que $m_1(\mathbb M^2)=m_1(\mathbb P_2^2)=4$, si $\Gamma$ admet 
un plongement propre dans $\mathbb M^2$ ou $\mathbb P^2_2$, alors 
$\mu(\Gamma)\leq4$ d'après le théorème~\ref{presc:th}. Le 
théorème~\ref{plongement:th} permet alors de conclure.

\begin{rema}\label{plongement:rem}
Si $\Gamma\to\Sigma$ est un plongement propre d'un graphe dans
une surface et $\Sigma\to\R^3$ un plongement de cette surface 
dans $\R^3$, on peut espérer par composition obtenir un plongement
non entrelacé de $\Gamma$. Si le plongement usuel $\mathbb M^2\to\R^3$
semble suffire pour tout plongement propre $\Gamma\to\mathbb M^2$,
il n'en va pas de même pour $\mathbb P_2^2$ : si on considère le plongement
$\mathbb P_2^2\to\R^3$ obtenu en enlevant un disque au plongement
$\mathbb M^2\to\R^3$, il existe des plongements propres 
$\Gamma\to\mathbb M^2$ tels que le plongement $\Gamma\to\R^3$ induit
soit entrelacé (on utilise le fait que le bord et l'âme du ruban de Möbius
forment un entrelacs non trivial). Mais cela n'exclut pas l'existence, 
pour un plongement $\Gamma\to\mathbb M^2$ fixé, d'un autre plongement 
$\mathbb P_2^2\to\R^3$ tel que le plongement de $\Gamma$ soit non 
entrelacé.
\end{rema}

\section{Bornes sur la multiplicité}\label{bornes}
\subsection{Propriétés de l'ensemble nodal d'une fonction propre}
Ce paragraphe sera consacré aux propriétés générales de l'ensemble
nodal des fonctions propres du problème de Steklov. Il s'agit généralement
de rappels de résultats montrés dans \cite{fs12}, \cite{ja14} et \cite{kkp12},
ou remontant à l'étude des fonctions propres du laplacien. Certaines sont
reformulées ou précisées. Elles sont regroupées en deux théorèmes : le
théorème~\ref{bornes:nodal1} rassemble les propriétés locales
de l'ensemble nodal et le théorème~\ref{bornes:nodal3} les propriétés
topologiques globales. Ces résultats ont généralement été démontrés
dans le cas homogène mais leur démonstration reste valide dans le 
cas général. Si une fonction $f$ s'annule en un point $p$, on appelle
ordre d'annulation de $f$ en $p$ le plus petit entier $k$ tel que
$\nabla^kf\neq0$.

\begin{theo}[\cite{ch76}, \cite{fs12}]\label{bornes:nodal1}
Soit $p$ un point de l'ensemble nodal d'une fonction propre $f$
du problème de Steklov sur $\Sigma$. On note $k$ l'ordre d'annulation 
de $f$ en $p$.

Au voisinage de $p$ l'ensemble nodal est la réunion de $k$ arcs de courbes 
s'intersectant en $p$, de courbure géodésique nulle en $p$ et
formant un système équiangulaire. De plus :
\begin{enumerate}
\item Si $p$ un point intérieur à $\Sigma$, alors $p$ est l'extrémité
de $2k$ arcs nodaux;
\item Si $p$ est sur le bord de $\Sigma$, alors $p$ est l'extrémité de
$k$ arcs nodaux rencontrant transversalement le bord.
\end{enumerate}
\end{theo}
On peut en déduire, comme dans~\cite{ch76}, que l'ensemble nodal est
la réunion de sous-variétés de dimension~1 immergées dans $\Sigma$,
chaque sous-variété étant soit un cercle ne rencontrant pas le bord,
soit un intervalle dont les extrémités sont sur le bord. On appellera
\emph{lignes nodales} ces sous-variétés. On peut aussi interpréter 
l'ensemble nodal comme un graphe plongé dans $\Sigma$, les sommets
étant les points critiques de la fonction propre. On appellera 
\emph{arêtes nodales} les arêtes de ce graphe. Soulignons que les arêtes
nodales sont plongées alors que les lignes nodales sont seulement immergées.

On aura aussi besoin du lemme qui suit et qui assure l'existence d'une
fonction propre s'annulant à un ordre élevé quand la multiplicité 
est grande :
\begin{lemme}\label{bornes:nodal2}
Soit $k$ un entier positif, $p$ un point de $\Sigma$ et $E$ un
espace  propre du problème de Steklov. Si l'une des deux conditions 
suivantes est vérifiée :
\begin{enumerate}
\item $p$ est un point intérieur à $\Sigma$ et $E$ est de dimension
au moins $2k$;
\item $p$ est sur le bord de $\Sigma$ et $E$ est de dimension au moins
$k+1$;
\end{enumerate}
alors $E$ contient une fonction qui s'annule à l'ordre $k$ en $p$.
\end{lemme}
\begin{proof}
Le cas où $p$ est intérieur à $\Sigma$ est traité dans \cite{be80}. 
On va donc supposer sur $p\in\partial\Sigma$ et que $E$ est de 
dimension au moins $k+1$. Par une déformation conforme, on peut identifier
localement la surface au demi-plan supérieur muni des coordonnées $(x,y)$ 
et le point $p$ au point $(0,0)$.

Comme $E$ est de dimension au moins $k+1$, on peut trouver une
fonction non nulle de $E$ telle que 
\begin{equation}
f(p)=\frac{\partial f}{\partial x}(p)=
\ldots=\frac{\partial^{k-1} f}{\partial x^{k-1}}(p)=0.
\end{equation}
 On sait aussi que 
$f$ vérifie l'équation aux valeurs propres $\partial f/\partial y=
\sigma\rho f$ en tout point du bord. En dérivant cette relation par
rapport à $x$ et en l'évaluant au point $p$, on obtient que 
\begin{equation}
\frac{\partial f}{\partial y}(p)=\frac{\partial^2 f}{\partial x\partial y}(p)
=\ldots=\frac{\partial^k f}{\partial x^{k-1}\partial y}(p)=0.
\end{equation}
Ces relations suffisent pour conclure si $k=1$ ou $2$.
 
Enfin, la fonction $f$ est harmonique relativement à la densité $\gamma$, 
c'est-à-dire qu'elle vérifie la 
relation $\divergence(\gamma\nabla f)=0$. Par conséquent,
$\frac{\partial^2 f}{\partial x^2}+\frac{\partial^2 f}{\partial y^2}$
peut s'écrire comme une expression d'ordre~0 ou~1 en $f$, et donc, 
en utilisant les relations déjà obtenues,
$\frac{\partial^2 f}{\partial y^2}(p)=0$ si $k\geq 3$.
En dérivant successivement l'équation $\divergence(\gamma\nabla f)=0$, on 
obtient par récurrence que toutes les dérivées partielles de $f$ d'ordre 
au moins~2 par rapport à $y$ sont nulles en $p$ jusqu'à l'ordre souhaité.
\end{proof}

\begin{theo}\label{bornes:nodal3}
La décomposition nodale de $\Sigma$ vérifie les propriétés suivantes :
\begin{enumerate}
\item Tous les domaines nodaux rencontrent le bord de la surface $\Sigma$.
\item Les domaines nodaux et les composantes connexes du graphe nodal 
d'une fonction propre sont incompressibles dans la surface $\Sigma$. 
\item Si $\Gamma$ est l'union d'une ou plusieurs composantes connexes
du graphe nodal, alors on a la relation $\chi(\Gamma)\geq\chi(\Sigma)$.
\end{enumerate}
\end{theo}
\begin{proof}
Les premières propriétés énoncées dans ce théorème sont déjà connues
(voir par exemple~\cite{ja14}), mais nous allons rappeler leur démonstration.

 Si un domaine nodal ne rencontre pas le bord de $\Sigma$, alors
la fonction propre est nulle sur toute la frontière du domaine, donc
elle est nulle à l'intérieur du domaine puisqu'elle est harmonique
et par conséquent elle est nulle partout selon la propriété d'unique 
prolongement.

Supposons qu'un domaine nodal $D$ contienne une courbe contractile
dans $\Sigma$ mais pas dans $D$. Alors le disque bordé par cette courbe
contient un domaine nodal distinct de $D$ et qui ne rencontre pas le 
bord de $\Sigma$. Or on vient de voir que c'est impossible, donc
toute courbe de $D$ contractile dans $\Sigma$ est contractile dans $D$.
Le même argument montre que le graphe nodal est incompressible.

Enfin, si on note $\Gamma$ la réunion d'une ou plusieurs composantes connexes
du graphe nodal, et $D_i$ les composantes connexes de $\Sigma\backslash
\Gamma$, la formule d'Euler-Poincaré nous dit que 
\begin{equation}
\chi(\Sigma)=\chi(\Gamma)+\sum_i\chi(D_i).
\end{equation}
Pour un $i$ donné, la caractéristique d'Euler de l'intérieur de $D_i$ est 
au plus égale à~1. Mais comme $D_i$ contient nécessairement un domaine 
nodal de la fonction propre, il rencontre le bord. Donc $D_i$ est
la réunion de son intérieur et d'un (ou plusieurs) intervalle ouvert 
du bord de $\Sigma$, donc $\chi(D_i)\leq0$. Par conséquent,
$\chi(\Gamma)\geq\chi(\Sigma)$.
\end{proof}
\begin{rema}
L'argument d'incompressibilité de l'ensemble nodal permet en fait de
montrer un résultat plus fort mais dont nous ne ferons pas usage : 
l'ensemble nodal ne contient pas de courbe fermée qui borde un domaine.
\end{rema}

\subsection{Majoration de multiplicité}
Le but de cette section est de démontrer les bornes sur la multiplicité
données par les théorèmes~\ref{intro:bornes} et~\ref{intro:sev}.

\begin{proof}[Démonstration du théorème~\ref{intro:bornes}]

La démonstration de l'inégalité~(\ref{intro:borne1}) reprend la technique 
de N.~Nadirashvili \cite{na88} avec les améliorations que permet le 
problème de Steklov. On fixe un entier $k\geq1$ et on note $m$ la
multiplicité de la $k$-ième valeur propre. On se donne aussi un point 
$p\in\partial\Sigma$. Selon le lemme~\ref{bornes:nodal2}, on peut
trouver une fonction non nulle dans le $k$-ième espace propre qui s'annule
à l'ordre~$m-1$ en~$p$. Le point~$p$ est donc l'extrémité de $m-1$ lignes 
nodales (théorème~\ref{bornes:nodal1}).

Parmi les lignes nodales d'extrémité~$p$, il y en a au 
plus $1-\chi(\Sigma)$ dont les deux extrémités sont~$p$. En effet, si on note
$q$ le nombre de lignes allant de $p$ à $p$, la composante connexe de~$p$ 
dans le graphe nodal aura une caractéristique d'Euler au plus égal à $1-q$,
et donc $\chi(\Sigma)\leq 1-q$ d'après le théorème~\ref{bornes:nodal3}.
On déduit de cette remarque que~$p$ est l'extrémité d'au moins 
$m+\chi(\Sigma)-2$ lignes distinctes.

On peut alors majorer $m$ en appliquant la formule d'Euler-Poincaré 
à la surface $\Sigma$. Contrairement à ce qui est fait dans la
démonstration du théorème~\ref{bornes:nodal3}, on considérera la 
surface \emph{ouverte}, c'est-à-dire en ignorant le bord et les sommets
du graphe nodal qui s'y trouve. On notera $\dot\Gamma$ le graphe
ainsi obtenu. On a alors

\begin{equation}
\chi(\Sigma)=\chi(\dot\Gamma)+\sum_i\chi(D_i).
\end{equation}
Chaque domaine nodal vérifie $\chi(D_i)\leq1$ donc $\sum_i\chi(D_i)\leq k+1$
selon le théorème de Courant. Comme $\dot\Gamma$ est la réunion d'au moins
$m+\chi(\Sigma)-2$ lignes nodales homéomorphes à des intervalles ouverts,
sa caractéristique d'Euler vérifie $\chi(\dot\Gamma)\leq-m-\chi(\Sigma)+2$.
On obtient finalement que
\begin{equation}
m\leq k+3-2\chi(\Sigma).
\end{equation}

\end{proof}

Avant d'entamer la démonstration du théorème~\ref{intro:sev}, on va
montrer une propriété des fonctions propres qui sera utile pour établir 
l'inégalité~(\ref{intro:borne3}):
\begin{lemme}\label{bornes:lemsev}
 Soit $f$ une fonction propre de Steklov et $c>0$ un réel positif. Chaque 
composante connexe de l'ensemble $f^{-1}(]-c,+\infty[)$ contient au moins un
domaine nodal positif de $f$. En particulier, si $f$ est associée à la 
première valeur propre non nulle, cet ensemble est connexe.
\end{lemme}
\begin{proof}
Soit $D$ une composante connexe de $f^{-1}(]-c,+\infty[)$ qui ne contient pas
de domaine nodal positif.  Comme $f$ est harmonique, le maximum de $f$ 
sur $D$ est atteint en un point $x$ situé sur le bord de la surface.

On a nécessairement $f(x)<0$. En effet, si $f(x)>0$ alors $D$ contient
un domaine nodal positif. Si $f(x)=0$, alors $x$ est l'extrémité d'une
ligne nodale de $f$ qui traverse le domaine, donc $f$ change de signe
dans $D$, en particulier elle prend des valeurs strictement positive dans
$D$ ce qui est impossible.

Au point $x$, la fonction $f$ vérifie l'équation aux valeurs propres
$\partial f/\partial\nu=\rho\sigma f$. Par conséquent, 
$(\partial f/\partial\nu)(x)$ est strictement négatif. Comme $\nu$ est 
un vecteur \emph{sortant} normal au bord, on a une contradiction avec 
le fait que $x$ soit le maximum de $f$ sur $D$.

Pour finir, d'après ce qui précède le nombre de composantes connexes
de $f^{-1}(]-c,+\infty[)$ est majoré par le nombre de domaines nodaux 
positifs de $f$. Or, si $f$ est associée à la première valeur propre 
non nulle elle a exactement deux domaines nodaux, un positif et un négatif.
Par conséquent, $f^{-1}(]-c,+\infty[)$ n'a qu'une composante connexe.
\end{proof}

Le théorème~\ref{intro:sev} repose sur un théorème de B.~Sévennec que nous
allons rappeler ici:
\begin{theo}[\cite{se02}, théorème~5]\label{bornes:sev}
Si $\Sigma$ est une surface close de caractéristique d'Euler strictement 
négative et $E$ un espace de fonction continues sur $\Sigma$ tel que
pour toute fonction $f\in E\backslash\{0\}$, les ensembles
$f^{-1}]0,+\infty[$ et $f^{-1}[0,+\infty[$ sont connexes et non vides,
alors $\textrm{Dim}(E)\leq 5-\chi(\Sigma)$.
\end{theo}
\begin{rema}\label{bornes:max}
Le principe du maximum assure que pour une fonction propre de Steklov $f\neq0$,
l'ensemble $f^{-1}[0,+\infty[$ n'a pas de composante connexe disjointe
de $f^{-1}]0,+\infty[$. 
\end{rema}
\begin{rema}\label{bornes:reg}
Pour appliquer le théorème~\ref{bornes:sev} aux fonctions propres de 
$\sigma_1$, on utilise le théorème de Courant qui assure que ces fonctions
propres ont exactement deux domaines nodaux. Or, G.~Alessandrini a montré
dans \cite{al98} qu'il reste valable en dimension~2 si $g$ et $\gamma$
sont seulement $L^\infty$. Comme on n'a pas besoin de la régularité des
fonctions propres sur le bord de la surface, on peut aussi supposer 
que $\rho$ est $L^\infty$.
\end{rema}
\begin{proof}[Démonstration du théorème~\ref{intro:sev}]
On commence par l'inégalité~(\ref{intro:borne2}). En collant un disque sur
chaque composante du bord de $\Sigma$, on obtient une
surface close $\overline\Sigma$ de caractéristique d'Euler $\bar\chi=\chi+l$.

On prolonge chaque fonction propre $f$ de la première
valeur propre sur chacun de ces disques de la manière suivante :
en notant $p$ le centre du disque et $x$ un point générique sur le bord, 
on fixe $f(p)=0$ et on interpole linéairement $f$ sur le segment $[p,x]$.
La fonction $f$ ainsi prolongée est continue, et elle a deux domaines nodaux,
un positif et un négatif
(en effet, comme elle est de signe constant sur $]px]$, les points
intérieurs aux disques appartiennent aux mêmes domaines nodaux que les 
points de $\Sigma$). Compte tenu de la remarque~\ref{bornes:max}, cette
construction assure que les ensembles $f^{-1}[0,+\infty[$ et 
$f^{-1}]-\infty,0]$ sont eux aussi connexes. D'après le
théorème de Sévennec, cet espace est de dimension au plus
$5-\bar\chi$. On a donc $m_1\leq5-\bar\chi=5-\chi-l$.

Pour montrer l'inégalité~(\ref{intro:borne3}), on va encore appliquer
le théorème~5 de Sévennec mais à un espace plus grand : on considère 
l'espace $E$ engendré par les fonctions propres prolongées à $\overline\Sigma$
comme précédemment et par la fonction $\varphi$ définie par $\varphi\equiv 1$
sur $\Sigma$, $\varphi(p)=-1$ et en interpolant la fonction de manière
affine sur les rayons du disque (comme on suppose que $l=1$, il n'y a 
qu'un disque et le point $p$ est unique). L'espace $E$ est constitué de 
fonction continues, on doit donc montrer que ces fonctions (à l'exception 
de la fonction nulle) ont exactement deux 
domaines nodaux, un positif et un négatif.

Une fonction de $E$ est une combinaison linéraire $f_{a,b}=
a\cdot f+b\cdot\varphi$,
$a,b\in\R$, $f$ étant une fonction propre prolongée à $\overline\Sigma$.
Si $a$ ou $b$ est nul, il est clair que la fonction a un seul domaine 
nodal positif. 

Sans nuire à la généralité, on peut supposer que $a=1$ et que $b$ est 
strictement positif.  La fonction $f_{a,b}$ est égale à $f+b$ sur
$\Sigma$ et $f_{a,b}(p)=-b$. Selon le lemme~\ref{bornes:lemsev} appliqué
à la fonction $f$, l'ensemble $f_{a,b}^{-1}(]0,+\infty[)=
f^{-1}]-b,+\infty[$ restreint à 
$\Sigma$ est connexe. Comme $f_{a,b}$ est affine sur les rayons du disque et
négative en son centre, les points du disque où $f_{a,b}$ est positive
sont connectés au domaine nodal positif sur $\Sigma$. La fonction
$f_{a,b}$ n'a donc globalement qu'un seul domaine nodal positif.

En restriction à $\Sigma$, la fonction $f_{a,b}$ peut avoir plusieurs
domaines nodaux négatifs. Cependant, ils sont connectés à $p$ par des
rayons sur lesquels $f_{a,b}$ est négative. Il n'y a donc qu'un seul
domaine nodal négatif sur $\overline\Sigma$. Comme $E$ est de 
dimension $m_1+1$, l'application du théorème de Sévennec donne 
$m_1+1\leq5-\bar\chi=4-\chi$, soit $m_1\leq3-\chi$.
\end{proof}

\subsection{Première valeur propre en petit genre}
On commencera dans ce paragraphe par démontrer le théorème~\ref{intro:thl},
puis on montrera séparément les calculs des trois valeurs de $m_k$
annoncées dans par le théorème~\ref{intro:thp}, les deux premières
dans ce paragraphe et la 3\ieme{} dans le suivant.

\begin{proof}[Démonstration du théorème~\ref{intro:thl}]
Pour majorer la multiplicité il s'agit, comme dans \cite{fs12}, 
\cite{ja14} et \cite{kkp12}, de se ramener à la démonstration utilisée
dans le cas des opérateurs de Schrödinger sur les surfaces closes. 
La minoration découle du théorème~\ref{intro:thpresc}.

Dans le cas de $\mathbb T^2_p$, on fixe un point $x$ intérieur à la surface
et on suppose que la multiplicité de $\sigma_1$ est au moins~7. Il existe
alors un espace de dimension~2 de fonctions propres qui
s'annulent à l'ordre~3 en $x$, et on note $f_\theta$ le cercle unité de
cet espace. Si on ferme la surface en contractant 
chaque bord de $\mathbb T^2_p$ en un point, les lignes nodales de
$f_\theta$ qui atteignent le bord se prolongent sur le quotient en
des courbes $C^1$ par morceaux. On peut alors reprendre les arguments
de \cite{be80} pour montrer que sur le quotient, l'ensemble nodal est 
formé d'exactement trois lacets disjoints non homotopes et aboutir
à une contradiction en faisant varier $\theta$.

Sur $\mathbb K^2_p$, on se ramène de la même manière à la démonstration
du théorème~4.1 de \cite{cdv87} en quotientant chaque bord sur un point.
\end{proof}

Avant de traiter le cas de $\mathbb P^2_2$, on va montrer que 
$m_1(\mathbb M^2)=4$ par une méthode
légèrement différente de celle du théorème~\ref{intro:bornes}. Cette
démonstration servira de base pour les deux propositions qui vont suivre.

 On commence
par montrer qu'il existe une fonction propre pour laquelle il existe au 
moins~6 extrémités de lignes nodales qui rejoignent le bord.
Soit $x$ un point du bord de $\mathbb M^2$. Si la multiplicité de $\sigma_1$
est supérieure ou égale à~5, il existe une fonction propre pour laquelle
$x$ est l'extrémité d'au moins quatre lignes nodales. S'il est l'extrémité
d'au moins~5 lignes, alors il y a au moins six extrémités de lignes qui
rejoignent le bord puisque ce nombre est toujours pair. S'il est 
l'extrémité d'exactement quatre lignes alors le signe de la fonction propre
est constant le long du bord au voisinage de $x$, par conséquent 
la fonction propre change de signe ailleurs sur le bord (sinon la fonction
est positive ou nulle partout sur le bord, donc partout sur $\mathbb M^2$) 
et il y a donc
au moins six extrémités de lignes le long du bord. Si on définit une
application $\mathbb M^2\mapsto\mathbb P^2$ par contraction du bord
de $\mathbb M^2$ sur un point (qu'on notera encore $x$), on peut considérer
l'image de la décomposition nodale de $\mathbb M^2$ par cette application :
elle décompose $\mathbb P^2$ en deux domaines, et $x$ est l'extrémité
de six arcs nodaux. Or G.~Besson a montré dans \cite{be80} qu'une telle
décomposition de $\mathbb P^2$ est impossible.

\begin{prop}
$m_1(\mathbb P^2_2)=4$.
\end{prop}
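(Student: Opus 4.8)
The plan is to establish $m_1(\mathbb P^2_2)=4$ by proving both inequalities separately. The lower bound $m_1(\mathbb P^2_2)\geq4$ follows immediately from Theorem~\ref{intro:thpresc} together with the computation $\mathrm{Chr}_0(\mathbb P^2_2)=5$ (Proposition~\ref{rel:th2}), since the prescription result gives a metric and densities realizing multiplicity $\mathrm{Chr}_0(\mathbb P^2_2)-1=4$. So the entire difficulty lies in the upper bound $m_1(\mathbb P^2_2)\leq4$, and I would model the argument closely on the proof just given for $m_1(\mathbb M^2)=4$.

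First I would argue by contradiction, assuming the multiplicity of $\sigma_1$ on $\mathbb P^2_2$ is at least $5$. As in the Möbius band case, I would fix a point $x$ on one of the two boundary components. By Lemma~\ref{bornes:nodal2} (boundary case), an eigenspace of dimension at least $5$ contains a nonzero function vanishing to order at least $4$ at $x$, so by Theorem~\ref{bornes:nodal1} the point $x$ is the endpoint of at least four nodal lines meeting the boundary transversally. The key step is then to count the total number of boundary endpoints of nodal arcs and force it to be large enough to contradict a topological constraint. As in the $\mathbb M^2$ argument, if $x$ is the endpoint of five or more lines, or if it is the endpoint of exactly four lines, I would use the parity of the total number of boundary endpoints together with the fact that a first eigenfunction changes sign on the boundary to conclude that there are at least six boundary endpoints of nodal arcs.

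Next I would pass to the closed surface $\mathbb P^2$ by contracting \emph{both} boundary components of $\mathbb P^2_2$ to points. This is the point where the argument must genuinely differ from the one-boundary Möbius case: contracting the boundary component carrying $x$ sends the six nodal arc ends emanating from $x$ to six arcs at the corresponding point of $\mathbb P^2$, while the second boundary component contracts to a separate point through which the remaining nodal structure passes. I would then invoke the incompressibility and Euler characteristic constraints of Theorem~\ref{bornes:nodal3}, together with Besson's analysis in \cite{be80} of admissible nodal decompositions of $\mathbb P^2$, to derive a contradiction. The cleanest route is to show that the image nodal set decomposes $\mathbb P^2$ into exactly two nodal domains (positive and negative) with one vertex of degree six, precisely the configuration Besson rules out.

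The main obstacle I anticipate is handling the second contracted boundary component rigorously. Unlike $\mathbb M^2$, where contracting the single boundary produces a clean point-singularity, here the quotient map $\mathbb P^2_2\to\mathbb P^2$ collapses two circles, and one must check that the second collapsed point does not create spurious nodal domains or destroy the connectivity needed to apply Besson's obstruction. I expect the resolution to require verifying, via Lemma~\ref{bornes:lemsev} and the remark~\ref{bornes:max} on the maximum principle, that a first eigenfunction has exactly two nodal domains and that the second boundary circle lies entirely within the closure of these two domains, so that its contraction merely adds a regular point to the nodal graph rather than a new vertex. Once this is secured, the Euler characteristic count $\chi(\mathbb P^2)=1$ combined with the six-valent vertex forces exactly the decomposition excluded by \cite{be80}, completing the proof.
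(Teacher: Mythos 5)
Your lower bound is essentially the paper's, but note a small circularity in how you source it: Proposition~\ref{rel:th2} \emph{deduces} $\mathrm{Chr}_0(\mathbb P^2_2)=5$ from the very statement you are proving (via Theorem~\ref{intro:thp}). What you actually need is only $\mathrm{Chr}_0(\mathbb P^2_2)\geq 5$, which follows from Theorem~\ref{rel:th1} alone since $\mathrm{Chr}(\mathbb P^2)-1=5$; combined with Theorem~\ref{intro:thpresc} this gives $m_1(\mathbb P^2_2)\geq 4$ without circularity.

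The genuine gap is in the upper bound. Your plan --- force six boundary endpoints of nodal arcs, contract the boundary circles, and invoke Besson's exclusion of a six-valent nodal vertex on $\mathbb P^2$ --- only works when all six endpoints lie on the \emph{same} boundary component, and on $\mathbb P^2_2$ the critical case is precisely the one where this fails. With four nodal arcs ending at $x$ on the boundary of $\mathbb M^2$, the eigenfunction can keep a constant sign on that entire component, and the required sign change can occur wholly on the second boundary component; indeed the boundary of the removed disk may lie entirely inside the negative nodal domain, so that there are only \emph{four} boundary endpoints in total. After contracting both circles one obtains a nodal graph of Euler characteristic $-1$ with two disk domains, which is perfectly consistent with the Euler--Poincaré identity and with Theorem~\ref{bornes:nodal3}: no static obstruction (Besson's or an Euler-characteristic count) rules it out, and the paper even exhibits this admissible configuration in figure~\ref{bornes:p22}. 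This is why the paper's proof does not stop at the counting step: it first shows the nodal decomposition is topologically forced to be that of figure~\ref{bornes:p22}, then runs a monodromy argument --- move $x$ once around the $\mathbb M^2$-boundary, track the loop $\gamma_x$ (unique up to homotopy) passing through the contracted disk point inside the negative domain, and observe that after a full turn each arc of $\gamma_{x_0}$ would have to be homotopic to its concatenation with a generator of the fundamental group of the boundary, which is impossible. That dynamical step is the missing idea in your proposal; your suggested fix (checking that the second contracted point is a regular point of the nodal graph) does not yield a contradiction, it merely confirms that the problematic configuration is topologically admissible.
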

\begin{proof}
On représente $\mathbb P^2_2$ comme étant $\mathbb M^2$ privé d'un disque.
On parlera de bord de $\mathbb M^2$ et de bord du disque pour désigner
les deux bords de $\mathbb P^2_2$.

Le point de départ de la démonstration est le même que celle qui précède. 
On suppose que la multiplicité de $\sigma_1(\mathbb P^2_2)$ est~5. Étant
donné un point $x$ du bord de $\mathbb M^2$, il existe une fonction
propre $f$ telle que $x$ soit localement l'extrémité de quatre lignes nodales.
On a vu précédemment qu'il ne peut pas y avoir d'autres extrémités 
de lignes nodales sur le bord de $\mathbb M^2$. Le signe de $f$ le long
de ce bord est donc constant et on le supposera positif. Comme les deux
domaines nodaux de $f$ rencontrent le bord de $\mathbb P^2_2$, le domaine
négatif rencontre nécessairement le bord du disque.

Si on considère un arc qui part orthogonalement de $x$ (avec par conséquent
deux lignes nodales de part et d'autre, selon le théorème~\ref{bornes:nodal1})
et qui rejoint un autre point du bord en restant dans le domaine nodal
positif, il découpe nécessairement le ruban $\mathbb M^2$ en un rectangle
comme sur la figure~\ref{bornes:p22}. La décomposition nodale de 
$\mathbb P^2_2$ est donc nécessairement topologiquement équivalente
à celle de cette figure, à ceci près que le bord du disque peut être
entièrement contenu dans le domaine nodal négatif.

\begin{figure}[h]
\begin{center}
\begin{picture}(0,0)%
\includegraphics{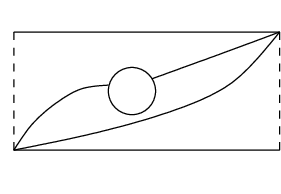}%
\end{picture}%
\setlength{\unitlength}{4144sp}%
\begingroup\makeatletter\ifx\SetFigFont\undefined%
\gdef\SetFigFont#1#2#3#4#5{%
  \reset@font\fontsize{#1}{#2pt}%
  \fontfamily{#3}\fontseries{#4}\fontshape{#5}%
  \selectfont}%
\fi\endgroup%
\begin{picture}(2142,1471)(346,-845)
\put(361,-781){\makebox(0,0)[lb]{\smash{{\SetFigFont{12}{14.4}{\rmdefault}{\mddefault}{\updefault}{\color[rgb]{0,0,0}$x$}%
}}}}
\put(2386,479){\makebox(0,0)[lb]{\smash{{\SetFigFont{12}{14.4}{\rmdefault}{\mddefault}{\updefault}{\color[rgb]{0,0,0}$x$}%
}}}}
\put(2071,-331){\makebox(0,0)[lb]{\smash{{\SetFigFont{12}{14.4}{\rmdefault}{\mddefault}{\updefault}{\color[rgb]{0,0,0}$+$}%
}}}}
\put(676,119){\makebox(0,0)[lb]{\smash{{\SetFigFont{12}{14.4}{\rmdefault}{\mddefault}{\updefault}{\color[rgb]{0,0,0}$+$}%
}}}}
\put(1714,-16){\makebox(0,0)[lb]{\smash{{\SetFigFont{12}{14.4}{\rmdefault}{\mddefault}{\updefault}{\color[rgb]{0,0,0}$-$}%
}}}}
\end{picture}%
\end{center}
\caption{Décomposition nodale de $\mathbb P^2_2$%
\label{bornes:p22}}
\end{figure}

Après avoir contracté le bord du disque sur un point qu'on notera $p$, 
on pourra donc toujours trouver dans $\mathbb M^2$ un lacet $\gamma_x$
d'extrémité $x$, unique à homotopie près, contenu dans le domaine nodal 
négatif, passant par $p$ et homotope à un générateur du groupe fondamental 
de $\mathbb M^2$.

 Supposons maintenant qu'on déplace continûment le point $x$. Le lacet
$\gamma_x$ peut alors se déformer continûment avec la contrainte de toujours
passer par $p$. Si on fait faire à $x$ un tour complet du bord de 
$\mathbb M^2$ en partant d'un point $x_0$, on revient à la même décomposition 
de $\mathbb P^2_2$, donc le lacet $\gamma_x$ devient homotope au lacet initial
$\gamma_{x_0}$. Or, chacun des arcs de $\gamma_{x_0}$ allant de $x_0$ à $p$
serait homotope à sa concaténation avec un générateur du groupe fondamental 
du bord, ce qui est impossible.
\end{proof}

\begin{prop}
$m_1(\mathbb T^2_1)=5$
\end{prop}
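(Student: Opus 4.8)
The plan is to prove the two inequalities separately. For the lower bound $m_1(\mathbb{T}^2_1)\geq 5$ I would simply invoke the prescription result: Theorem~\ref{intro:thpresc} gives $m_1(\Sigma)\geq\mathrm{Chr}_0(\Sigma)-1$ for every surface with boundary, and the computation of the relative chromatic number carried out in Section~\ref{relatif} (Theorem~\ref{rel:th1} applied to $\Sigma=\mathbb{T}^2$, recorded in table~\ref{rel:table}) yields $\mathrm{Chr}_0(\mathbb{T}^2_1)=\mathrm{Chr}(\mathbb{T}^2)-1=6$, whence $m_1(\mathbb{T}^2_1)\geq 5$. This half therefore requires no new work.

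For the upper bound $m_1(\mathbb{T}^2_1)\leq 5$ I would argue by contradiction, assuming that $\sigma_1$ has multiplicity $6$. The general estimate of Theorem~\ref{intro:bornes} gives here only $m_1\leq k-2\chi+3=6$ (with $k=1$, $\chi=-1$), and neither branch of Theorem~\ref{intro:sev} applies since $\chi+l=0$ and $\chi=-1>-2$; so one extra unit must be gained by a topological argument, exactly as one unit was gained in the preceding proof of $m_1(\mathbb{P}^2_2)=4$. I would represent $\mathbb{T}^2_1$ as the closed torus minus one open disk, with single boundary component $C$, and reuse the local analysis developed for $\mathbb{M}^2$: fixing $x\in C$, Lemma~\ref{bornes:nodal2} (with $\dim E=6\geq k+1$) produces an eigenfunction $f$ vanishing to high order at $x$, so that by Theorem~\ref{bornes:nodal1} several nodal lines issue transversally from $x$, while the incompressibility statement together with $\chi(\Gamma)\geq\chi(\mathbb{T}^2_1)=-1$ (Theorem~\ref{bornes:nodal3}) bounds the number of these lines that return to $x$, leaving enough genuinely distinct nodal lines meeting $C$.

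The decisive step would then transport the configuration to the closed torus: contracting $C$ to a point $x^\ast$ produces $\mathbb{T}^2$, and the nodal lines reaching $C$ extend across $x^\ast$ into $C^1$-by-pieces curves, exactly as in the proof of Theorem~\ref{intro:thl}. The feature specific to $\mathbb{T}^2_1$, and the source of the improvement from $6$ to $5$, is that $C$ is the \emph{only} boundary component: since $f$ is a first eigenfunction it has precisely two nodal domains, both of which meet $C$ by Theorem~\ref{bornes:nodal3}, so the sign of $f$ must change along $C$ and the negative domain cannot ``escape'' onto a second boundary circle the way it does on $\mathbb{P}^2_2$. Combining this with the Steklov maximum-principle input of Lemma~\ref{bornes:lemsev} and Remark~\ref{bornes:max}, which controls the connectedness of the superlevel sets, I would run the Besson-type analysis of \cite{be80} used for $\mathbb{T}^2_p$ to show that multiplicity $6$ forces the nodal set, away from $x^\ast$, to contain at least two disjoint, pairwise non-homotopic essential loops on $\mathbb{T}^2$ — which is impossible, since any two disjoint essential simple closed curves on the torus are freely homotopic. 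Alternatively, mimicking the monodromy argument of the $\mathbb{P}^2_2$ proof, I would let $x$ travel once around $C$ and track the homotopy class of a distinguished loop carried by the negative domain through $x^\ast$, deriving a contradiction from the twist induced by $C$.

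The main obstacle I anticipate is precisely this homotopy bookkeeping. On the Möbius band $\pi_1=\mathbb{Z}$ and the boundary wraps twice around the core, which made the monodromy contradiction very short; on $\mathbb{T}^2_1$ the fundamental group is free of rank $2$ and its boundary is the commutator $[a,b]$, so I must keep careful track of which classes the nodal loops represent and, in particular, verify that under the contraction they do not all merge harmlessly at $x^\ast$ but genuinely realize two independent essential loops on $\mathbb{T}^2$. Pinning down rigorously the interplay between the order of vanishing at $x$, the inequality $\chi(\Gamma)\geq-1$, and the genus-one topology is the delicate point; once it is in place, everything else is a transcription of the $\mathbb{M}^2$ and $\mathbb{T}^2_p$ arguments to the present surface.
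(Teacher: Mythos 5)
Your overall strategy --- argue by contradiction from multiplicity $6$, use Lemma~\ref{bornes:nodal2} and Theorem~\ref{bornes:nodal1} to produce an eigenfunction with five nodal arcs issuing from a boundary point $x$, contract the boundary to a point $x^\ast$ of the closed torus, and conclude by a Besson-type monodromy as $x$ makes a full turn --- is exactly the paper's, as is your lower bound via Theorem~\ref{intro:thpresc} and $\mathrm{Chr}_0(\mathbb T^2_1)=6$. However, the decisive step is only sketched, and of the two ways you propose to finish, the first is not viable: after contraction the nodal set is three loops that all pass through $x^\ast$ (six arc-ends concentrate there), so they are neither disjoint nor necessarily simple, and the fact that disjoint essential \emph{simple} closed curves on $\mathbb T^2$ are freely homotopic yields no contradiction --- three pairwise non-homotopic essential loops through a common basepoint certainly exist on the torus. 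There is no static impossibility here; only the monodromy argument concludes.

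For that monodromy route, the ``bookkeeping'' you defer is precisely what must be supplied, and the paper supplies it in two steps you omit: (i) for each $x$ the eigenfunction vanishing to order $5$ at $x$ is unique up to scale (otherwise some eigenfunction vanishes to order $6$ and one concludes as in Theorem~\ref{intro:bornes}), so the line $x\mapsto f_x$ varies continuously; (ii) parity forces a sixth arc-end at a point $x'$ with $x'\neq x$ for \emph{every} $x$ (else order $6$ again), and this is what makes the nodal configurations deform by a genuine homotopy as $x$ moves along the boundary. Granting these, after one full turn the based classes of the three nodal loops are conjugated by the boundary class $[a,b]$ in $\pi_1(\mathbb T^2_1)$ yet must return to themselves; since these classes remain non-trivial in $\pi_1(\mathbb T^2)$ after contraction while $[a,b]$ becomes trivial, they cannot commute with $[a,b]$ in the free group, which is the contradiction. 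Finally, Lemma~\ref{bornes:lemsev} and Remark~\ref{bornes:max} play no role in this proof; they belong to the proof of Theorem~\ref{intro:sev}.
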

\begin{proof}
On procède par l'absurde en supposant que $\sigma_1(\mathbb T^2_1)$
est de multiplicité~6 pour une métrique $g$ et des densités  $\rho, \gamma$ 
données.

Soit $x$ un point du bord de $\mathbb T^2_1$. Selon le 
théorème~\ref{bornes:nodal1} et le lemme~\ref{bornes:nodal2}, il existe 
une fonction  propre $f$ qui s'annule à l'ordre~5 en $x$ et telle que
cinq arcs nodaux partent de $x$. Notons que cette fonction est unique :
dans le cas contraire, on pourrait en choisir une s'annulant à l'ordre~6
et on aurait une contradiction comme dans le théorème~\ref{intro:bornes}.
Comme les extrémités de lignes nodales rejoignant le bord sont nécessairement
en nombre pair, il existe une 6\ieme{} extrémité en un point $x'$ du bord,
$x'$ étant distinct de $x$ (sinon $f$ s'annulerait à l'ordre~6 en $x$).

Comme dans la démonstration précédente, on va déplacer le point $x$ le long
du bord. Les arguments développés par G.~Besson dans \cite{be80} permettront
d'aboutir à une contradiction. Si on contracte le bord sur un point 
---~notons-le $\bar x$~--- alors il y a six arcs nodaux partant de $\bar x$.
On sait alors que l'ensemble nodal est la réunion de trois lacets
non homotopes entre eux et qui ne s'intersectent qu'en $\bar x$ (cf.
la démonstration du théorème~3.C.1 dans \cite{be80}). Cette remarque
permet alors de transposer le reste de la démonstration de \cite{be80} :
si $x$ se déplace continûment en partant d'un point $x_0$, on peut
construire une homotopie entre les 
lignes nodales de la fonction propre $f_x$ correspondante et les lignes 
nodales de $f_{x_0}$ (il est crucial ici que $x\neq x'$ quel que
soit $x$). Après un tour complet de $x$ le long du bord, les classes 
d'homotopies des lignes nodales sont donc les conjuguées des lignes de
$f_{x_0}$ par la classe d'homotopie du bord. Or elles doivent être identiques
aux lignes nodales de $f_{x_0}$. Il y a contradiction car les classes
d'homotopies des lignes nodales sont des classes d'homotopies
non triviales dans le tore $\mathbb T^2$ obtenu par contraction du bord,
elles ne commutent donc pas avec la classe du bord.
\end{proof}

\subsection{Deuxième valeur propre du disque}
Pour finir, on montre la majoration de $m_2(\mathbb D^2)$ annoncée par
le théorème~\ref{intro:thdisque}.
On sait déjà, d'après le théorème~\ref{intro:bornes}, que 
$m_2(\mathbb D^2)\leq3$. On va supposer qu'il y a égalité pour aboutir à
une contradiction.

On se donne une métrique $g$ et une densité $\gamma$ telle que la 
multiplicité de $\sigma_2(\mathbb D^2)$ soit~3. Selon le théorème de 
Courant, une fonction propre de $\sigma_2(\mathbb D^2)$ a deux ou 
trois domaines nodaux. Comme ces domaines sont simplement connexes, la
décomposition nodale du disque est nécessairement topologiquement
équivalente à l'une des trois indiquées sur la figure~\ref{bornes:disque}.
\begin{figure}[h]
\begin{center}
\begin{picture}(0,0)%
\includegraphics{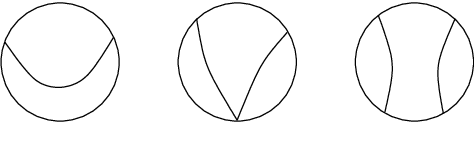}%
\end{picture}%
\setlength{\unitlength}{4144sp}%
\begingroup\makeatletter\ifx\SetFigFont\undefined%
\gdef\SetFigFont#1#2#3#4#5{%
  \reset@font\fontsize{#1}{#2pt}%
  \fontfamily{#3}\fontseries{#4}\fontshape{#5}%
  \selectfont}%
\fi\endgroup%
\begin{picture}(3616,1241)(668,-1070)
\put(991,-1006){\makebox(0,0)[lb]{\smash{{\SetFigFont{12}{14.4}{\rmdefault}{\mddefault}{\updefault}{\color[rgb]{0,0,0}(a)}%
}}}}
\put(2341,-1006){\makebox(0,0)[lb]{\smash{{\SetFigFont{12}{14.4}{\rmdefault}{\mddefault}{\updefault}{\color[rgb]{0,0,0}(b)}%
}}}}
\put(3691,-1006){\makebox(0,0)[lb]{\smash{{\SetFigFont{12}{14.4}{\rmdefault}{\mddefault}{\updefault}{\color[rgb]{0,0,0}(c)}%
}}}}
\end{picture}%
\end{center}
\caption{Domaines nodaux sur le disque%
\label{bornes:disque}}
\end{figure}

On considère la sphère unité de l'espace propre de $\sigma_2(\mathbb D^2)$,
qu'on notera~$S^2$, et on note respectivement $D_a$ (resp. $D_b$ et $D_c$)
l'ensemble des fonctions propres de~$S^2$ dont la décomposition nodale
est celle de la figure~\ref{bornes:disque}.a (resp.
\ref{bornes:disque}.b et \ref{bornes:disque}.c). La contradiction
découlera de l'étude de la partition de $S^2$ ainsi formée. 

Commençons par noter que, comme la multiplicité est égale à~3, le
théorème~\ref{bornes:nodal1} et le lemme~\ref{bornes:nodal2} montrent
qu'il existe
nécessairement des fonctions propres réalisant la situation~(b) de la
figure~\ref{bornes:disque} et donc $D_b$ est non vide. Plus précisément,
pour chaque point $x$ de $\partial\mathbb D^2$, il existe une droite de 
fonction propre s'annulant à l'ordre~2 en $x$, et cette droite varie 
continûment avec $x$ (elle est unique car dans le cas contraire, on pourrait
trouver une fonction s'annulant à l'ordre~3 en $x$). Comme cette droite coupe
$S^2$ en deux points, l'ensemble $D_b$ est la réunion de deux cercles,
l'un correspondant aux fonctions ayant un seul domaine nodal positif, l'autre
à celles en ayant deux (en particulier, ces cercles sont disjoints).

 Le complémentaire de $D_b$ est donc formé de trois composantes connexes,
deux qui sont antipodales et homéomorphes à des disques et une homéomorphe
à un cylindre. Comme $D_c$ a nécessairement deux composantes connexes
antipodales, l'une formée des fonctions ayant un domaine nodal positif
et l'autre des fonctions en ayant deux, $D_c$ est la réunion des deux 
disques et $D_a$ est le cylindre.

On choisit l'une des deux composantes de $D_c$, par exemple celle
dont les fonctions ont deux domaines nodaux positifs, qu'on notera $D_c^+$,
et on construit une application de $D_c^+$ dans le cercle de la manière 
suivante : on se donne une orientation sur $\partial\mathbb D^2$, et 
pour chaque fonction de $D_c^+$ on considère la paire de points
de $\partial\mathbb D^2$ où la fonction s'annule en décroissant (par 
rapport à l'orientation du bord). Ces deux points sont nécessairement
disjoints (sinon il y aurait au moins trois extrémités de lignes
nodales se rejoignant en un même points du bord, ce qu'on a déjà exclu).
On obtient ainsi une application continue partant de $D_c^+$ et dont
l'image est $S^1\times S^1$ privé de la diagonale et quotientée par
$(x,y)\sim(y,x)$. Cette image est homotope à un cercle, ce qui 
permet de définir une application $D_c^+\to S^1$. Or, le long du
bord de $D_c^+$, c'est-à-dire d'une des composantes de $D_b$, cette 
application est homotopiquement non triviale, ce qui fournit la contradiction
cherchée.

\providecommand{\bysame}{\leavevmode ---\ }
\providecommand{\og}{``}
\providecommand{\fg}{''}
\providecommand{\smfandname}{\&}
\providecommand{\smfedsname}{\'eds.}
\providecommand{\smfedname}{\'ed.}
\providecommand{\smfmastersthesisname}{M\'emoire}
\providecommand{\smfphdthesisname}{Th\`ese}

\end{document}